\documentclass[12pt, letterpaper]{amsart}
\usepackage[margin=1in]{geometry}
\usepackage{pb-diagram,amsmath,amssymb,epsfig,tikz,color, enumitem} 
\usepackage{amsthm}
\usepackage{amssymb}
\usepackage{mathtools}
\usepackage{tikz}
\usepackage{tikz-cd}
\usepackage{graphics}
\usepackage{lscape,graphicx}
\usepackage{amsfonts}
\usepackage{longtable}
\usepackage{epsfig}
\usepackage{float}
\usepackage{rotating}
\usepackage{adjustbox}
\usepackage{fancyhdr}
\usepackage[titletoc]{appendix}
\usepackage{multicol}
\usepackage{anyfontsize}
\usepackage{lipsum}
\usepackage{setspace}
\usepackage{xcolor}
\usepackage[english]{babel}
\usepackage{ragged2e}
\usepackage{blindtext}
\usepackage{bigints}
\usepackage{multirow}
\usepackage{fontenc}
\usepackage{dirtytalk}
\usepackage{tabularray}
\usepackage{epsfig}
\usepackage{verbatim}
\usepackage{hyperref}
\usepackage{mathrsfs}
\usepackage{csquotes}
\usetikzlibrary{petri}
\usepackage{wrapfig}
\usetikzlibrary{positioning}
\usepackage{microtype}  
\usepackage{array}
\usepackage{pdflscape}

\AtBeginDocument{\justifying \setlength{\emergencystretch}{1.2em}}

\definecolor{purple}{rgb}{0.59, 0.44, 0.84}

\def\PSL{\text{PSL}}
\def\SL{\text{SL}}
\title{Exceptional Sets for Certain ${}_2F_1$ Hypergeometric Functions}
\author{Archisman Bhattacharjee}
\address{Department of Mathematics, Louisiana State University, LA 70803}
\email{abhat25@lsu.edu}

\usepackage{macros}
\begin{document}

\begin{abstract}
For $a,b,c \in \Q$, the exceptional set associated to the Gauss hypergeometric function $_2F_1(a,b,c;z)$ is defined by
$$
E(a,b,c) := \{ z \in \overline{\Q} \mid {}_2F_1(a,b,c;z) \in \overline{\Q} \}.
$$
In this paper, the exceptional sets $E(a,b,c)$ are determined explicitly for each 
% {\cy[need more description such as $a,b,c$ range ]} 
$_2F_1(a,b,c;z)$ whose monodromy group is an arithmetic triangle group 
in Takeuchi’s class I. The description is obtained via hypergeometric--modular identities together with transcendence results for periods of abelian varieties due to W\"ustholz, and classical result of Schneider on algebraic values of $j$-invariant of elliptic curves with complex multiplication.
\end{abstract}
% \begin{abstract}
% Using the consequences of W\"ustholz's Analytic Subgroup Theorem, we explicitly determine the infinite subset of algebraic numbers at which the value of a certain Gauss hypergeometric function is also algebraic. We deduce hypergeometric-modular identities by exploring local solutions of the Gauss Hypergeometric Differential Equation. We use the relation between Hauptmodul and Schwarz triangle function to find the correct  field and apply Schneider's transcendence result. {\cy[did you modify based on the discussion last week?]}
% \end{abstract}
\maketitle
\tableofcontents
\section{Introduction} 

Given $a,b,c \in \Q$ and $-c \notin \N \cup \{0\}$, the Gauss hypergeometric function is defined by the convergent power series
$$
_2F_1(a,b,c;z) = \;\; \hyp(a,b,c,z) \coloneqq \sum_{k=0}^{\infty}\frac{(a)_k (b)_k}{(c)_k} \frac{z^k}{k!},
\;\;\;\; |z|<1,
$$
where $(a)_0=1$, $(a)_k = a(a+1)\dots(a+k-1)$.  This is a holomorphic local solution around $z=0$ of the hypergeometric differential equation 
\begin{equation} \label{eq00}
\text{HDE}(a,b,c;z) :\;\; y''(z) + \frac{c-(a+b+1)z}{z(1-z)}y'(z) -\frac{ab}{z(1-z)}y(z) = 0,
\end{equation}
% Given $a,b,c \in \Q$ and $-c \notin \N \cup \{0\}$, the Gauss hypergeometric function is defined for $|z|\leq 1$ by
% $$
% _2F_1(a,b,c;z) = \;\; \hyp(a,b,c,z) \coloneqq \sum_{k=0}^{\infty}\frac{(a)_k (b)_k}{(c)_k} \fra
which has three regular singularities $0,1,\infty$. When $|z| >1$, we use the same notation for an analytic continuation of the normalized ($_2F_1(a,b,c;0)=1$) local solution along a chosen path in $\mathbb{CP}^1\setminus\{0,1,\infty\}$. 
% {\cy[As defined, you need to discuss the analytic continuation in more details later in the paper]}c{z^k}{k!},
% $$
% where $(a)_0=1$, $(a)_k = a(a+1)\dots(a+k-1)$. Throughout this paper, we assume $|z|\leq 1$ so that the above series converges. This can be generalized to define the function ${}_nF_{n-1}(a_1,\dots,a_n;\,b_1,\dots,b_{n-1};z)$ by
% $$
% {}_nF_{n-1}(a_1,\dots,a_n;\,b_1,\dots,b_{n-1};z)
% =
% \sum_{k=0}^{\infty}
% \frac{(a_1)_k\dots(a_n)_k}{(b_1)_k\dots(b_{n-1})_k}\frac{z^k}{k!}.
% $$
% The function $_2F_1(a,b,c;z)$ satisfies the differential equation
% \begin{equation} \label{eq00}
% \text{HDE}(a,b,c;z) :\;\; y''(z) + \frac{c-(a+b+1)z}{z(1-z)}y'(z) -\frac{ab}{z(1-z)}y(z) = 0,
% \end{equation}
% which has three singularities $0,1,\infty$.

Near a non-singular point $z_0$, the solution space $S(a,b,c;z_0)$ of this differential equation \eqref{eq00} gives a homomorphism 
$$
\pi_1(\mathbb{CP}^1 \setminus \{0,1,\infty\}, z_0) \rightarrow \text{GL}(S(a,b,c;z_0)),
$$ 
called the monodromy representation, which is determined up to conjugation. Its image is called the monodromy group, which is a triangle group, defined in Section~\ref{sec:basic}. The monodromy group is called \emph{arithmetic} if it is an arithmetic subgroup of $\mathrm{PSL}_2(\R)$ as described in Takeuchi~\cite{takeuchi1977arithmetic}; equivalently, it is commensurable with the norm-one group of an order in a quaternion algebra over a totally real number field.
\par In~\cite{beukers1989monodromy}, Beukers and Heckman proved that the finiteness of the monodromy implies that the hypergeometric function is algebraic. These cases were classically classified by Schwarz through his list of finite monodromy hypergeometric equations; see~\cite{schwarz1873hypergeometrische}. 
But in general, the $_2F_1$ is not always an algebraic function of $z$. 
% {\cy[add one line to explain the new notation]}
For non-algebraic cases, in~\cite{wolfart1988werte}, Wolfart studied the exceptional set for the datum $(a,b,c)$, which is given by
$$
E(a,b,c) := \{z \in \overline{\Q} \;\;\mid\;\; _2F_1(a,b,c;z) \in \overline{\Q}\}.
$$
He conjectured that the exceptional set is infinite if and only if the monodromy group is arithmetic. He proved this conjecture for the case where the projective monodromy group is an arithmetic triangle group by showing that the exceptional set is Zariski dense in $\C$, and hence infinite. Takeuchi classified the $85$ arithmetic triangle groups in~\cite{takeuchi1977commensurability}.
% He conjectured that the exceptional set is infinite if and only if the monodromy group is arithmetic {\cy [explain the last word]}. He proved that for cases where the projective monodromy group is an arithmetic triangle group, the exceptional set is Zariski dense in $\C$, hence infinite. In general, the monodromy group is called arithmetic if it is an arithmetic subgroup of $\mathrm{PSL}_2(\R)$ as described in Takeuchi~\cite{takeuchi1977arithmetic}. {\cy [move this sentence forward, namely swap the order the the previous two sentences.]} Takeuchi classified $85$ different arithmetic triangle groups in~\cite{takeuchi1977commensurability}.
% \ft{ \sout{In the case of monodromy group of $HDE(a,b,c;z)$ hypergeometric differential equations, such groups are triangle groups, defined in Section \ref{sec:basic}.}} 
 % {\cy[include the finite monodromy here? Also arithmetic first time use, please add a brief explanation and connect to Takeuchi in the next section]}.
Later, Cohen and W\"ustholz~\cite{cohen2002applications} proposed a weaker version of the Andr\'e--Oort conjecture
\cite[Conjecture~1.1]{edixhoven2003subvarieties} to approach this problem.  The Zariski density of exceptional sets for higher-dimensional analogues of Appell--Lauricella hypergeometric series was proved by Desrousseaux--Tretkoff--Tretkoff in~\cite{desrousseaux2008zariski}. See~\cite[Chapter 5]{tretkoff}, for the works of Edixhoven--Yafaev~\cite{edixhoven2003subvarieties}, Desrousseaux--Tretkoff--Tretkoff~\cite{desrousseaux2008zariski, desrousseaux2004valeurs, tretkoff2008transcendence, tretkoff_2011_transcendence} in this direction.  
% \ft{[add works of Archinard, Baba-Granath, Yang, somewhere around here.]}

Archinard explicitly determined exceptional sets for hypergeometric series with monodromy group isomorphic to $\mathrm{PSL}_2(\Z)$ in~\cite{archinard2003exceptional}, using her earlier construction of the associated abelian varieties in~\cite{archinard2000abelian}. Baba--Granath studied exceptional sets arising from hypergeometric functions associated to Shimura curves and quaternionic modular forms in~\cite{baba2015quaternionic}; see also their work on Picard--Fuchs differential equations attached to Shimura curves over $\Q$ in~\cite{baba2021picard}. Yang studied special values of hypergeometric functions through periods of CM elliptic curves in~\cite{yang2018special}. Since the Class I of Takeuchi's classification contains $\mathrm{PSL}_2(\Z) \simeq (2,3,\infty)$ and other groups commensurable with it, a natural question arises:

\medskip
\noindent
\textit{If the monodromy group corresponding to $(a,b,c)$ is isomorphic to one of the nine non-compact arithmetic triangle groups commensurable with $\mathrm{PSL}_2(\Z)$, what are the exceptional sets $E(a,b,c)$?}

\medskip

$$
  \begin{diagram}
  \node[2]{(2,6,\infty)}  \arrow{sw,l,-}{2}  \arrow{s,l,-}{2}
  \node[1]{(2,3,\infty)}   \arrow{sw,l,-}{4}  \arrow{s,l,-}{2} \arrow{se,l,-}{3} 
   \node[1]{(2,4,\infty)}   \arrow{s,l,-}{2}  \arrow{se,l,-}{2} \\
   \node[1]  {(6,6,\infty)} 
   \node[1]  {(3,\infty,\infty)} 
    \node[1]  {(3,3,\infty)} \arrow{se,l,-}{3}
   \node[1]   {(2,\infty,\infty)} \arrow{s,r,-}2 
   \node[1]  {(4,4,\infty)} \\ 
   \node[4]{(\infty,\infty,\infty)}
  \end{diagram}
$$
\begin{center}Figure 1 \label{fig1}: Takeuchi's Class I \end{center}

\par Our main result can be summarized as follows. Denote by $\UH = \{x+iy \;\mid  x,y\in \R, \; y>0\}$ the complex upper half-plane.

\begin{theorem} \label{tmain}
Let $(a,b,c)$ be a datum satisfying $0<a,b,c<1$ such that the monodromy group $\Delta$ associated to $\text{HDE}(a,b,c;z)$ is an arithmetic triangle group in Takeuchi's class I (see Figure \ref{fig1}). 
% commensurable with $\mathrm{PSL}_2(\Z)$.
Then there exist a modular function $t$ for the corresponding group $\Delta$, and a positive integer $d \in \{1,2,3\}$ such that
$$
z \in E(a,b,c)\; \text{if and only if}\; z = t(\tau_d) \;\text{for some}\; \tau_d \in \Q(\sqrt{-d}) \cap \UH.
$$
Under the same assumption, if $c \geq 1$, $E(a,b,c) = \{0\}$.
\end{theorem}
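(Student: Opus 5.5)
The plan is to reduce each case to a single \emph{hypergeometric--modular identity} and then apply two transcendence inputs. For each datum $(a,b,c)$ with $0<a,b,c<1$ whose monodromy group $\Delta$ is one of the nine Class I groups of Figure 1, the inverse of the Schwarz triangle map $s(z)=y_2(z)/y_1(z)$, a ratio of local solutions of $\text{HDE}(a,b,c;z)$, is a Hauptmodul $t$ for $\Delta$ (suitably normalized so that the cusp goes to $z=0$ or the appropriate singular point). We then identify
$$
{}_2F_1(a,b,c;t(\tau))^{k} = f(\tau),
$$
where $f$ is a holomorphic modular form of weight $k$ for $\Delta$. Concretely, $f$ is a product or quotient of $\eta$-quotients or Eisenstein series on a group containing $\Gamma_0(N)$ with $N\in\{1,2,3,4,6,\dots\}$ commensurable with $\mathrm{PSL}_2(\Z)$. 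This comes from comparing local exponents: the exponent differences $1-c$, $c-a-b$, $a-b$ are $1/p,1/q,1/r$, and $y_1$ is a weight-$1$ (or fractional weight) form in $\tau=s(z)$. Since all nine groups are commensurable with $\mathrm{PSL}_2(\Z)$ through the finite-index diagram, I will derive every identity from the classical ones for $(2,3,\infty)$ (via $j$ and $E_4^{1/4}$) and for $(\infty,\infty,\infty)$ (via $\lambda$ and $\theta_3^2$), using quadratic and cubic transformation formulas of ${}_2F_1$ to pass along the edges of the diagram. The integer $d\in\{1,2,3\}$ records which elliptic point, $i$ or $\rho=e^{2\pi i/3}$, or which $\sqrt{-2}$-type Atkin--Lehner fixed point, the relevant group is naturally normalized around. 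It therefore fixes the imaginary quadratic field in which $f$ takes algebraic-multiple-of-a-fixed-period values.

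With the identity in hand, the proof of the main equivalence proceeds as follows. If $\tau\in\UH$ is a CM point, then $t(\tau)\in\overline{\Q}$, since $t$ is algebraic over $\Q(j)$. Also $f(\tau)=\Omega_K^{k}\cdot\alpha$, where $\alpha\in\overline{\Q}$ and $\Omega_K$ is the Chowla--Selberg period of $K=\Q(\tau)$. Now ${}_2F_1$ also equals a ratio of periods of the associated abelian variety (Wolfart's construction). By Wüstholz's analytic subgroup theorem, a period ratio of this type is algebraic exactly when the CM periods match, and one checks that this happens iff $K=\Q(\sqrt{-d})$, because the normalization of $f$ is $f(\tau_0)$ with $\tau_0\in\Q(\sqrt{-d})$ and $f(\tau)/f(\tau_0)$ algebraic. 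Conversely, suppose $z\in\overline{\Q}$ with ${}_2F_1(a,b,c;z)\in\overline{\Q}$ and $z\neq 0$. Write $z=t(\tau)$, so $j(\tau)\in\overline{\Q}$. Schneider's theorem then forces $\tau$ to be imaginary quadratic, and the Wüstholz argument above forces $\Q(\tau)=\Q(\sqrt{-d})$. The point $z=0$ corresponds to the cusp and is trivially included.

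For $c\ge1$ I will use contiguity relations. The function ${}_2F_1(a,b,c;z)$ with $c\ge 1$ is a $\overline{\Q}(z)$-linear combination of $F$ and $F'$ for a datum in $(0,1)^3$ with the same monodromy. Via the modular identity, $F'$ involves the quasi-modular $E_2$-type derivative, i.e.\ a quasi-period. Wüstholz's theorem, in the form of linear independence of $\Omega$ and $\eta/\Omega$-type quantities with $\pi$, shows that the combination is transcendental at every nonzero algebraic $z$. The case of a CM point in $\Q(\sqrt{-d})$ is handled using $E_2^*(\tau)$ being an algebraic multiple of $\Omega^2$ while $E_2$ carries the extra term $3/(\pi\,\mathrm{Im}\,\tau)$, which is transcendent relative to $\Omega^2$ by Chudnovsky/Wüstholz.

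The main obstacle is the first step. For each of the nine groups and each admissible $(a,b,c)$, including different $(a,b,c)$ with the same projective triangle, we must pin down the exact normalization of the Hauptmodul $t$ and the form $f$, so that the constant relating $f(\tau)$ to $\Omega_{\Q(\sqrt{-d})}^k$ is algebraic and the correct $d$ emerges. I would first do $(2,3,\infty)$ and $(\infty,\infty,\infty)$ explicitly, then transport the result along the edges of Figure 1, tracking algebraic constants through each transformation.
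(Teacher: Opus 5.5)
There is a genuine gap in your converse, which is the heart of the theorem. The step ``write $z=t(\tau)$, so $j(\tau)\in\overline{\Q}$; Schneider's theorem then forces $\tau$ to be imaginary quadratic'' misapplies Schneider. His theorem says that an \emph{algebraic} $\tau\in\UH$ that is not imaginary quadratic has $j(\tau)$ transcendental, so it needs $\tau\in\overline{\Q}$ as a hypothesis. Here $\tau=D_{abc}(z)$ is a ratio of periods with no reason to be algebraic. Indeed $j(\tau)\in\overline{\Q}$ holds for \emph{every} algebraic $z$, since $t$ and $j$ are algebraically dependent over $\overline{\Q}$, while values such as $j=1$ are attained only at non-CM points. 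So your argument never uses the algebraicity of ${}_2F_1(a,b,c;z)$ to obtain CM-ness. Your Wüstholz step only enters afterwards, to select the field among CM points. What is missing is a transcendence argument that also covers non-CM $\tau$.

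The paper gets this from the Euler integral: ${}_2F_1(a,b,c;z)=\int_0^1\omega/B(b,c-b)$, with both integrals periods of the first kind on $T_z$ and $T_0$ (Lemma~\ref{lemma:period kind}). Algebraicity of the quotient forces, by Wüstholz's theorem, a simple factor of $T_z$ to be isogenous to one of $T_0$. That factor has CM by $\Q(\sqrt{-d})$; for $M=8$ this uses the reduction of $B(\frac18,\frac58)$ to $B(\frac12,\frac14)$, resp.\ the imprimitive CM type $\{1,3\}$ giving $\Q(\sqrt{-2})$. This yields $D_{abc}(z)\in\Q(\sqrt{-d})$ in one stroke (Lemma~\ref{lemma10}). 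In your modular language the same input reads: ${}_2F_1(a,b,c;z)$ is a $\overline{\Q}$-combination of periods of $E_\tau$ divided by a period of the CM curve $E_{\tau_0}$, and linear independence of periods of non-isogenous curves over $\overline{\Q}$ forces $E_\tau\sim E_{\tau_0}$.

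There are two further problems. First, your identity ${}_2F_1(a,b,c;t(\tau))^k=f(\tau)$ with $f$ modular cannot hold for $0<c<1$. The exponent difference at $z=0$ is $1-c\neq0$, so $z=0$ is the image of an elliptic point $\tau_0\in\Q(\sqrt{-d})$, not of a cusp. The normalized power-series branch is then proportional to $(\tau-\bar\tau_0)\,\mathcal W(\tau)^{1/2}$. This is the non-removable factor $\tau+\alpha\tau_0$ of Theorem~\ref{thm:hgmodid}, where $c_1\neq0$ is forced by the stabilizer of $\tau_0$. Since that factor has coefficients in $\Q(\sqrt{-d})$, your forward direction survives after correction. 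However, your remark that $z=0$ is a cusp is wrong: $0=t(\tau_0)$ is already of the required form.

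Second, your $c\ge1$ argument by contiguity cannot work for $c=1$, which includes the whole $(\infty,\infty,\infty)$ case. Contiguous data change $c$ by integers, so no datum with $0<c<1$ is reachable. For $c>1$, the asserted transcendence of the combination at non-CM $\tau$ needs exactly the non-CM input missing above. The paper instead argues directly: $\int_0^1\omega$ is of the first kind, while $B(b,c-b)$ is of the second kind for $c>1$ and lies in $\pi\overline{\Q}$ for $c=1$. Wüstholz's theorem then excludes an algebraic quotient.
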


\vspace{0.3cm}
In this article, we explicitly determine the exceptional sets for all such $(a,b,c)$. They are of the form
$$
E(a,b,c) = \{z \in \overline{\Q} \mid \exists\; \tau_d \in \Q(\sqrt{-d}) \cap \UH\;\; \text{with}\;\; z = t (\tau_d)\},
$$
where $t$ is a Hauptmodul chosen for each triangle group in Table~\ref{table:2}. 
% {\cy [either replace , by . or replace The by the]} 
The integers $d$ and modular functions $t$ are explicitly determined for all such $(a,b,c)$ in Table~\ref{table:main}. For example, the datum \(\left(\frac14,\frac14,\frac12\right)\) corresponds to the triangle group \((2,\infty,\infty)\), identified with the \(\Gamma_0(2)\)-modular curve, and the relevant CM field is \(\Q(i)\). We have 
$$
E(\frac{1}{4},\frac{1}{4},\frac{1}{2}) = \{z \in \overline{\Q} \mid \exists\; \tau_d \in \Q(i) \cap \UH\;\; \text{with}\;\; z = S_2 (\tau_d)\},
$$ 
where $S_2$ is a modular function for $\Gamma_0(2)$, defined in Table~\ref{table:2}. Taking \(\tau=i\), we  can also give explicit algebraic values like $S_2(i)=9$ and
$$
{}_2F_1\!\left(\frac14,\frac14,\frac12;9\right)=\frac{2-i}{2\sqrt2}.
$$
More such explicit examples are listed in Table \ref{table:explicit_examples}. The same CM point $\tau=i$ as in the above example also appears in \cite{AroraMondalNakagawaTu2026}. The authors define
$u(\tau):=-64\,\eta(2\tau)^{24}/\eta(\tau)^{24}$ which is related to $S_2$ by $u=1/(1-S_2)$,
one of the cross-ratio transformations of Section~\ref{sec:hypergeom}. So the value
$S_2(i)=9$ gives $u(i)=-1/8$, which is the CM-value of the Hauptmodul used in
\cite[Table~1(A)]{AroraMondalNakagawaTu2026} (with $\tau_d=i$). This is used to evaluate the special
$L$-value of the weight-3 CM Hecke eigenform \texttt{f64.3.c.a}. The explicit CM-point
evaluations of Hauptmoduln obtained in this paper (Table~\ref{table:explicit_examples})
can therefore supply algebraic inputs for such special $L$-value computations. The authors also find the explicit transcendental values of 
${}_3F_2\!\left(\frac12,\frac12,\frac12;1,1;u(\tau_d)\right)$ at different CM points $\tau_d$ which equals ${}_2F_1\!\left(\frac14,\frac14,1;u(\tau)\right)^2$ due to Clausen's Formula~\cite[Section 2.5]{slater1966hypergeometric}. 
% For example, 
% $$
% E(\frac{1}{4},\frac{1}{4},\frac{1}{2}) = \{z \in \overline{\Q} \mid \exists\; \tau_d \in \Q(i) \cap \UH\;\; \text{with}\;\; z = S_2 (\tau_d)\},
% $$ 
% See Table \ref{table:2} for the definition of $S_2$. {\cy [mention that it is $(2,\infty,
% \infty)$ and  the CM field is $\Q(\sqrt{-1})$. Relate it to the CM value of [7] Table 1, check whether it is $d=2$, $t=-8$ of that table. ]}
% Our results also give explicit algebraic values such as
% $$
% {}_2F_1\!\left(\frac14,\frac14,\frac12;9\right)
% =
% \frac{2-i}{2\sqrt2}.
% $$
\par In this paper, the main idea is to relate values of the Gauss hypergeometric function to periods using Euler’s integral representation. This leads to algebraic curves and their associated abelian varieties, providing a geometric setting in which the $_2F_1$ is interpreted in terms of periods. In this framework, the results of Wolfart and W\"ustholz~\cite{wolfart1988werte, wolfart1985uberlagerungsradius} give necessary conditions for the algebraicity of such quantities. 
\par The hypergeometric functions considered in this paper also admit modular form oriented identities. By expressing $_2F_1(a,b,c;t(\tau))$ in terms of modular forms, we obtain explicit formulas that can be evaluated at CM points. These modular descriptions complement Wolfart and W\"ustholz's transcendence results and allow us to identify the values of $z$ for which $_2F_1(a,b,c;z)$ is algebraic. Combining these two approaches, we determine the exceptional sets for the data arising from Takeuchi’s class I.
\par A detailed study of the relationship between hypergeometric functions and modular forms has been developed in recent years. Earlier connections appeared through Gaussian hypergeometric series and supercongruences, for example, in the work of Ahlgren--Ono and Kilbourn~\cite{ahlgren2000gaussian,kilbourn2006extension}. More recently, Allen--Grove--Long--Tu developed the explicit hypergeometric--modularity method, which associates modular forms and modular Galois representations to suitable hypergeometric data~\cite{allen2025explicit,allen2025explicit2}. This framework has also inspired further explicit constructions of hypergeometric modularity, such as~\cite{maity2026explicithypergeometricmodularitycertain}.

% \par A detailed study of the relationship between hypergeometric functions and modular forms has been developed in recent years. In~\cite{allen2025explicit, allen2025explicit2}, the authors introduce the explicit hypergeometric-modularity method, which associates a modular form to a given hypergeometric datum. See~\cite {ahlgren2000gaussian,kilbourn2006extension, maity2026explicithypergeometricmodularitycertain} for more on this area. 
\par On the geometric side, abelian varieties arising from generalized Legendre curves have been studied in~\cite{deines2016generalized}, where their periods, Galois representations, and endomorphism algebras are considered. These ideas have also found applications in the study of special values of modular forms and $L$-functions; see for example~\cite{AroraMondalNakagawaTu2026, Rosen2024Lvalues,  rosen2025modular}. In particular, the latter work relates special $L$-values of certain CM Hecke eigenforms to hypergeometric series via Ramanujan’s theory of elliptic functions to alternative bases and the modularity of hypergeometric Galois representations. 
\par Hypergeometric functions over finite fields are also an important object in arithmetic geometry. See~\cite{fuselier2022hypergeometric} for a systematic study of hypergeometric functions over finite fields. Such functions have also been used to study traces of Hecke operators. Formulas of this type were obtained in~\cite{ahlgren2000gaussian, ahlgren2003gaussian,ono2007values}. More recently, Hoffman--Li--Long--Tu gave a general geometric approach for arithmetic triangle groups, expressing traces of Hecke operators on spaces of cusp forms in terms of hypergeometric character sums over finite fields~\cite{hoffman2024traces}.

% \par For further developments on hypergeometric motives, modularity, and special values of $L$-functions, see~\cite{roberts2022hypergeometric,dembele2022special,deines2016hypergeometric,fuselier2022hypergeometric}. 
\par The present work is motivated by these developments and focuses on determining algebraicity and exceptional sets for certain $_2F_1$ hypergeometric functions. The paper is organized as follows. In Section~\ref{sec:basic},  we introduce the algebraic curves associated to Euler’s integral and describe the corresponding abelian varieties. In Section~\ref{sec:hypergeom}, we derive identities relating the hypergeometric functions to modular forms via suitable Hauptmoduln. In Sections~\ref {sec:ex1} and~\ref{sec:ex2}, we apply transcendence results to study algebraicity and determine the exceptional sets. The appendix contains tables summarizing the results and explicit examples of algebraic values. 
\medskip

\textbf{Acknowledgments.} The author is grateful to Ling Long and Fang-Ting Tu for helpful discussions on the subject. He would also like to thank Gene Kopp, Esme Rosen, Paresh Arora, and Abhishek Dangodara for valuable feedback on improving the presentation of this paper. The author is supported by a summer research assistantship from the Department of Mathematics, Louisiana State University.

\section{Abelian Varieties Associated to Hypergeometric Data} \label{sec:basic}
We begin with Euler’s integral representation of the hypergeometric function and view it in a geometric setting. The integrands naturally define differential forms on algebraic curves, whose desingularizations give rise to abelian varieties. Following Archinard~\cite{archinard2000abelian}, this allows us to study the associated periods in a suitable framework.
\par Consider the Euler integral representation of the hypergeometric function, which allows us to interpret the integral in a geometric setting.
% {\cy[add a sentence to prepare the reader to follow you to change topic. Something like "Next we consider..."]} 
For $c>b>0$, Euler showed that
\begin{equation}\label{euler}
\hyp(a,b,c,z) = \dfrac{1}{B(b,c-b)} \int_0^1 x^{b-1} (1-x)^{c-b-1} (1-zx)^{-a} dx  
\end{equation}
where $B(r,s) = \int_0^1 x^{r-1} (1-x)^{s-1}{ dx}$. Let L.C.D denote the least positive common denominator for a set of fractions, and
$$N= \text{L.C.D}(a,b,c),\; A= N(1-b),\; B=N(1+b-c),\; C=Na.$$ 
We assume that $N \nmid A, B, C, A+B+C$. The integrand in the numerator $x^{b-1} (1-x)^{c-b-1} (1-zx)^{-a}dx$ is a differential 1-form on the projective variety defined by 
\begin{equation} \label{var1}
C(N,z): y^N = x_1^{N-A-B-C}x_2^A(x_1-x_2)^B(x_1-zx_2)^C.  
\end{equation}
Similarly, let 
$$M=\text{L.C.D}(b,c), \;P=M(1-b),\; Q=M(1+b-c).$$
The integrand $x^{b-1} (1-x)^{c-b-1}{ dx}$ appearing in the definition of $B(b,c-b)$ is a differential 1-form on the projective variety defined by 
\begin{equation} \label{var2}
C(M,0): y^M = x_1^{M-P-Q}x_2^P(x_1-x_2)^Q. 
\end{equation}

These projective varieties can be viewed as curves in $\mathbb{P}^2(\C)$. 
% For any curve $C$ in $\mathbb{P}^2(\C)$, there exists a finite sequence of blow-ups 
% $$X_k \rightarrow X_{k-1} \rightarrow \dots \rightarrow X_1 \rightarrow X_0= \mathbb{P}^2(\C)$$
% such that the blowup $C^{\prime}$ of $C$ at $0$ on $X_k$ is non-singular \ft{[do you use the blowup in the paper? I don't think it is necessary to say this here. ]}.  
In~\cite{archinard2000abelian}, Archinard constructed $X(N,z)$, the desingularization of the projective curve defined by $C(N,z)$, by gluing local desingularizations. 
\par For a given $n$, with $1 \leq n \leq N$, let $\omega_n = \frac{x^{\alpha}(1-x)^{\beta}(1-zx)^{\gamma}dx}{y^n}$ be a differential form on $C(N,z)$ for some $\alpha, \beta, \gamma \in \Z$. For the desingularization map $\pi_z: X(N,z) \rightarrow C(N,z)$, the pullback $\pi_z^*\omega_n$ is holomorphic on $X(N,z)$ if and only if the following inequalities are satisfied. 
\begin{equation}\label{eq1}
\begin{aligned}
\alpha & \geq \frac{(N,A)+nA}{N} -1 ,\\  
\beta & \geq \frac{(N,B)+nB}{N} -1 ,\\ 
\gamma & \geq \frac{(N,C)+nC}{N} -1 ,\\
\alpha + \beta + \gamma & \leq \frac{n(A+B+C)-(N, N-A-B-C)}{N} -1.\\
\end{aligned} 
\end{equation}
Here $(N, A)$ denotes the g.c.d of $N$ and $A$. These inequalities are obtained by investigating whether the local parameterizations of $X(N,z)$ are algebraic morphisms.
\par To apply the transcendental results by Wolfart and W\"ustholz~\cite{wolfart1985uberlagerungsradius}, we need abelian varieties where $\int \pi_z^*\omega$ lives as a period for the differential forms $\omega$ appearing as integrand in \eqref{euler}. The Jacobian variety of $X(N,z)$ is defined by 
\begin{equation}\label{eqJac1}
\text{Jac}(X(N,z)) = \Omega^1[X(N,z)]^{\text{*}}/ \iota (H_1(X(N,z)(\C), \Z))     
\end{equation} where $(\iota[\gamma]) (\omega) = \int_\gamma \omega$. 
% This is not suitable, since its dimension is given by
% $$g(X(N,z)) = N+1-\frac{1}{2}\Big((N,A)+(N,B)+(N,C)+(N,N-A-B-C)\Big).$$
% To remove the dependence on $A,B$, and $C$, 
We will restrict ourselves to consider an abelian subvariety $\text{Jac}_{\text{new}}(X(N,z))$ of  $\text{Jac}(X(N,z)$ defined in~\cite{archinard2000abelian}. Since the group $\mu_N$ of $N$th roots of unity acts on $X(N,z)$, by
$$
\zeta_N \cdot (x,y) = (x,\zeta_N^{-1}\, y), \;\;\;\;\;\; \zeta_N \in \mu_N,
$$
% {\cy [describe the action]}
it also acts on the space of holomorphic differentials $\Omega^1(X(N,z))$. Hence $\Omega^1(X(N,z))$ decomposes into isotypical components
$$
\Omega^1(X(N,z))=\bigoplus_{i=1}^{N-1} V_i,
$$
where $V_i$ is the eigenspace on which $\zeta_N \in \mu_N$ acts by multiplication by $\zeta_N^i$. Following~\cite{archinard2000abelian}, we define $\text{Jac}_{\mathrm{new}}(X(N,z))$ to be the abelian subvariety of $\text{Jac}(X(N,z))$ whose space of holomorphic differentials is
$$
\Omega^1(\text{Jac}_{\mathrm{new}}(X(N,z)))=\bigoplus_{\substack{1\le i\le N-1\\ (i,N)=1}} V_i.
$$
In particular,
$$
\dim \text{Jac}_{\mathrm{new}}(X(N,z))
=
\sum_{\substack{1\le i\le N-1\\ (i,N)=1}} \dim V_i.
$$
Moreover, for $(i,N)=1$, we have
$$
\dim V_i+\dim V_{N-i}=2,
$$
and therefore $\dim \text{Jac}_{\mathrm{new}}(X(N,z))=\phi(N)$,  where $\phi$ is the Euler's totient function.
% \par Since there is an action of the group $\mu_N$ of $N$th roots of unity on the space of holomorphic differential forms $\Omega^1[X(N,z)]$ on $X(N,z)$, all the components $V_i$ in its isotypical decomposition 
% $$\Omega^1[X(N,z)] = \bigoplus_{i=1}^N V_i$$
% contribute to the dimension. But the dimension of the new Jacobian defined by
% $$\text{Jac}_{\text{new}}(X(N,z)) \coloneqq \bigoplus_{\substack{1\leq i\leq N \\ (i,N)=1}} V_i  \ft{[???? ]}$$
% is $\phi(N)$ since, for $(i,N) = 1$, $\dim (V_i) +  \dim(V_{N-i}) = 2$. 

For $c \notin \Z$, corresponding to $B(b,c-b)$, we similarly consider for $1 \leq n \leq N$, the differential form $\eta_n = \frac{x^{\alpha}(1-x)^{\beta}dx}{y^n}$ for some $\alpha, \beta \in \Z$. The holomorphic condition of the pullback of $\eta_n$ on $X(M,0)$ are given by inequalities
\begin{equation}\label{eq2}
\begin{aligned}
\alpha & \geq \frac{(M,P)+nP}{M} -1, \\  
\beta & \geq \frac{(M,Q)+nQ}{M} -1 ,\\ 
\alpha + \beta & \leq \frac{n(P+Q)-(M, M-P-Q)}{M} -1.\\
\end{aligned} 
\end{equation}
The dimension of the new Jacobian variety of $X(M,0)$ is $\frac{\phi(M)}{2}$. 
\par For a compact Riemann surface $X$ of genus $g >0$, $\text{Jac}(X)$ can be identified with $\C^g/\Lambda$ where $\Lambda = \Z A_1 \oplus \Z A_2 \oplus \dots \oplus\Z A_{2g}$ is the period lattice. Here, $A_i = (\int_{a_i} \omega_1, \int_{a_i} \omega_2, \dots, \int_{a_i} \omega_g )$ where  $\{a_i\}_{1\leq i \leq 2g}$ is a $\Z$-basis of $H_1(X,\Z)$ and $\{\omega_i\}_{1 \leq i \leq g }$ is a basis of $\Omega^1_{\text{an}}[X]$. Due to a similar construction over $\Z[\zeta_N]$, there is an isogeny of algebraic varieties
$$\text{Jac}_{\text{new}}(X(N,z)) \sim \C^{\phi(N)}/ \Lambda(z)$$
for 
{\small
\[
\Lambda(z)=
\left\{\Bigl(
  \sigma_i(u)\,\int_{\gamma_{0 1}}\!\omega_i
  \;+\;
  \sigma_i(v)\,\int_{\gamma_{\frac{1}{z}\infty}}\!\omega_i
\Bigr) : (i,N) = 1, \omega_i \;\text{is a basis element of}\;V_i \; ; \; u,v\in \Z[\zeta_N] \right\},
\]
}
where $\sigma_i : \Z[\zeta_N] \rightarrow \Z[\zeta_N]$ , defined by $\sigma_i(\zeta_N) = \zeta_N^i$, is an automorphism, and the set of Pochammer loops $\{\gamma_{0 1}, \gamma_{\frac{1}{z}\infty}\}$ (see~\cite[Section 2.1]{fuselier2022hypergeometric}) is a generator of a rank-$2$ $\Z[\zeta_N]$-submodule of $H_1(\text{Jac}_{\text{new}}(X(N,z)),\Z)$.

\section{Hypergeometric Modular identities} \label{sec:hypergeom}
% {\cy [You summarize GLC discussion in the previous section, should also summarize the previous work on modular functions and modular forms arising from HG.]}
\subsection{Schwarz triangle group} We begin by recalling the hyperbolic structure on the upper half-plane and the associated Schwarz triangle function arising from the hypergeometric differential equation. This gives a geometric realization of triangle groups. We also describe explicit Hauptmoduln for the non-compact arithmetic triangle groups in Takeuchi’s class I, which will be used later to relate hypergeometric functions to modular functions.
\par The upper half plane, endowed with a hyperbolic metric
$$(ds)^2 = \frac{(dx)^2 + (dy)^2}{y^2},\;\;\;\;\tau = x + i y \in \UH$$ is called the hyperbolic plane.
For a given $(a,b,c)$, define $$p = \frac{1}{e_1} = |1-c|,\;\;q = \frac{1}{e_2} =|c-a-b|,\;\;r= \frac{1}{e_3} = |a-b|,$$
where $e_1,e_2,e_3\in \N \cup \{\infty\}$. Here we adopt the convention that $\frac{1}{\infty}=0$. For this $(a,b,c)$ let $f,g$ be two linearly independent solutions to the $\text{HDE}(a,b,c;z)$ \eqref{eq00} at a point $z \in \UH$. We define  the Schwarz map 
\begin{equation}\label{Schwarz} D_{abc}(z) = \frac{f(z)}{g(z).} \end{equation} 
% {\cy[for the discussion here, what are given? $a,b,c$? If that is the case, probably you introduce $e_1,e_2,e_3$ using the relations below. Then you add when $|1-c|,|c-a-b|,|a-b|$ are reciprocals of natural numbers including infinity.]}
If $p+q+r < 1$, we can pick $f$ and $g$ in a way such that $D_{abc}$ gives a bijection from $\UH \cup \R$ onto a hyperbolic triangle (curvilinear triangle in hyperbolic plane), with vertices $D(0)$, $D(1)$, $D(\infty)$ and corresponding angles $p\pi, q\pi, r\pi$. By Schwarz reflection (see~\cite[Proposition~7.1 and Corollary~7.2]{yoshida1997hypergeometric}) across each side of the image triangle, corresponding to
one of the intervals \((-\infty,0)\), \((0,1)\), and \((1,\infty)\), the
Schwarz map extends to the lower
half-plane and maps bijectively onto the adjacent reflected triangle. Let $\tau_p,\tau_q,\tau_r$ be the reflections in the sides opposite the angles
$p\pi,q\pi,r\pi$, respectively. Then the corresponding triangle group is
$$
(e_1,e_2,e_3)=\Delta(p,q,r)
=
\left\langle
\tau_p,\tau_q,\tau_r
\ \middle|\
\tau_p^2=\tau_q^2=\tau_r^2=1,\;
(\tau_q\tau_r)^{e_1}=(\tau_r\tau_p)^{e_2}=(\tau_p\tau_q)^{e_3}=1
\right\rangle,
$$
where a relation with $e_i=\infty$ is omitted.
\subsection{Hauptmoduln} Let $\Gamma$ be a triangle group in Takeuchi class I. Define
$$
X(\Gamma):=\Gamma\backslash \mathbb{H}^{*},
\;\;\;\;\;
\mathbb{H}^{*}:=\mathbb{H}\cup \Q \cup \{\infty\}.
$$
Then $X(\Gamma)$ has genus $0$. A Hauptmodul $t$ for $\Gamma$ is a modular function
$$
t:X(\Gamma)\longrightarrow \mathbb P^{1}(\mathbb C)
$$
which generates the function field $\mathbb C(X(\Gamma))$. Equivalently, $t$ gives an isomorphism
$$
X(\Gamma)\xrightarrow{\sim}\mathbb P^{1}(\mathbb C),
$$
between algebraic curves and hence is a uniformizer on $X(\Gamma)$.

\par With this definition, the inverse $t$ of the map $D_{abc}$ is precisely a Hauptmodul for the triangle group $(e_1,e_2,e_3)$. A standard choice of the Hauptmoduln appearing in the literature is the Hauptmoduln appearing in~\cite[Appendix]{allen2025explicit}. Our choices are related to those by the natural $S_3$-action on the three singular values $0,1,\infty$, by the cross-ratio transformations
$$
t,\qquad 1-t,\qquad \frac{1}{t},\qquad \frac{1}{1-t},
\qquad \frac{t}{t-1},\qquad \frac{t-1}{t}.
$$
Changing from one Hauptmodul to another by these transformations only permutes the singularities $0,1,\infty$. Table~\ref{table:2} gives, for each triangle group in Takeuchi's class I, the Hauptmodul used in this paper, its relation with the Hauptmodul appearing in~\cite[Appendix]{allen2025explicit}, the corresponding group $\Gamma$, and the preferred vertices $\tau_0=t^{-1}(0)$, $\tau_1=t^{-1}(1)$, and $\infty=t^{-1}(\infty)$ of the Schwarz triangle. Here $u,t_3,t_4^+$ and $t_6^+$ denote the Hauptmoduln used in~\cite[Appendix]{allen2025explicit}.

\begin{center}
\small{
\begin{table}[H]
{
\renewcommand{\arraystretch}{2.85}
\begin{tabular}{ |c|c|c|c|c|c|}
\hline
\multirow{2}{*}{Triangle}  & & & & &  \\[-10pt]
 group & Hauptmoduln $t$ & Comparision & $\tau_0$ & $\tau_1$ & $\Gamma$\\[-4pt]
\hline 
$(\infty, \infty, \infty)$ 
& $\lambda \coloneqq$ Modular $\lambda$ 
& $\lambda$ 
& 1 & 0 & $\Gamma(2)$\\
\hline
$(2, \infty, \infty)$ 
& $S_2 \coloneqq E_{2,2}^2(\tau)\dfrac{\eta^{8}(\tau)}{64\eta^{16}(2\tau)}$ 
& $S_2=\dfrac{u-1}{u}$ 
& $\dfrac{1+i}{2}$ & 0 & $\Gamma_0(2)$ \\
\hline
$(3, \infty, \infty)$ 
& $S_3 \coloneqq 8E_{1,-3}^3(\tau)\dfrac{\eta^{3}(\tau)}{\eta^{9}(3\tau)}$ 
& $S_3=\dfrac{1}{t_3}$ 
& $\dfrac{2+\zeta_3}{3}$ & 0 & $\Gamma_0(3)$  \\
\hline
$(2, 3, \infty)$ 
& $J_3 \coloneqq \dfrac{j}{1728}$ 
& $J_3=\left(\dfrac{1728}{j}\right)^{-1}$ 
& $1+\zeta_3$ & $i$ & $\PSL_2(\Z)$ \\
\hline
$(2, 4, \infty)$ 
& $J_4 \coloneqq \dfrac{S_2^2}{4(S_2-1)}$ 
& $J_4=(t_4^+)^{-1}$ 
& $\dfrac{1+i}{2}$ & $\dfrac{i}{\sqrt{2}}$ & $\Gamma_0^{+}(2)$\\
\hline
$(2, 6, \infty)$ 
& $J_6 \coloneqq \dfrac{S_3^2}{4(S_3-1)}$ 
& $J_6=(t_6^+)^{-1}$ 
& $\dfrac{2+\zeta_3}{3}$ & $\dfrac{i}{\sqrt{3}}$ & $\Gamma_0^{+}(3)$\\
\hline
$(3, 3, \infty)$ 
& $R_3 \coloneqq \dfrac{1 - \sqrt{1-J_3}}{2}$ 
& $4R_3(1-R_3)=J_3$ 
& $1+\zeta_3$ & $\zeta_3$ & $\Big\langle \mathscr{P},\Gamma(2)\Big\rangle$   \\
\hline
$(4, 4, \infty)$ 
& $R_4 \coloneqq \dfrac{1 - \sqrt{1-J_4}}{2}$ 
& $4R_4(1-R_4)=J_4$ 
& $\dfrac{1+i}{2}$ & $\dfrac{-1+i}{2}$ & $\Big\langle \mathscr{Q},\mathscr{R} \Gamma(4)\Big\rangle$ \\
\hline
$(6, 6, \infty)$ 
& $R_6 \coloneqq \dfrac{1 - \sqrt{1-J_6}}{2}$ 
& $4R_6(1-R_6)=J_6$ 
& $\dfrac{2+\zeta_3}{3}$ & $\dfrac{-1+\zeta_3}{3}$ & $\Big\langle  \mathscr{Q} ,   \mathscr{S }\Gamma(6)\Big\rangle$ \\
\hline
\end{tabular}}
\vspace{0.5cm}
\caption{Description of triangle groups and comparison with the Hauptmoduln used in~\cite[Appendix]{allen2025explicit}.}
\label{table:2}
\end{table}}
\end{center}
Here, $\zeta_3$ is the primitive third root of unity $
\zeta_3 = e^{2\pi i/3} = \frac{-1+\sqrt{-3}}{2}
$. $\eta(\tau)$ denotes the Dedekind eta-function
$$
\eta(\tau)= q_\tau^{1/24}\prod_{n=1}^\infty (1-q_\tau^n), \;\;\;\;\; q_\tau=e^{2\pi i \tau}.
$$
$E_{2,2}$ and $E_{1,-3}$ are defined as 
$$E_{2,2}(\tau) = 1+24\sum_{n\geq 1}\Big(\sigma_1(2n)-2\sigma_1(n)\Big)q_\tau^n,\;\;E_{1,-3}(\tau) = \frac{1}{6} + \sum_{n\geq 1} \Big( \sum_{d \mid n}\legendre(-3,d)\Big)q_\tau^n,$$ 
where $\sigma_1(n) = \sum_{d \mid n} d$. They are modular forms of weight $2$, level $2$, and weight $1$, level $3$, respectively. The $2 \times 2$ matrices $\mathscr{P}, \mathscr{Q}, \mathscr{R}, \mathscr{S}$ are as follows:
$$\mathscr{P}=  \bgroup\renewcommand*{\arraystretch}{1} \begin{pmatrix}-1 & -1 \\ 1 & 0 \end{pmatrix} \egroup,\;\; \mathscr{Q} = \bgroup\renewcommand*{\arraystretch}{1} \begin{pmatrix}1 & -2 \\ 0 & 1 \end{pmatrix} \egroup,\;\; \mathscr{R} =  \dfrac{1}{\sqrt{2}}\bgroup\renewcommand*{\arraystretch}{1}\begin{pmatrix}2 & -1 \\ 2 & 0 \end{pmatrix} \egroup, \;\;\mathscr{S} = \dfrac{1}{\sqrt{3}}\bgroup\renewcommand*{\arraystretch}{1} \begin{pmatrix}3 & -1 \\ 3 & 0 \end{pmatrix} \egroup.$$
To specify our choice of Hauptmoduln, first consider the triangle groups $(2,2N,\infty) = \Gamma_0(N)^+=\Gamma_0(N)\cup\Gamma_0(N)W_N$ for $N\in\{2,3\}$ where 
$$\Gamma_0(N)= \Big\{ \begin{pmatrix} a & b \\ c& d \end{pmatrix} \in \SL_2(\Z) \mid c \equiv 0 \bmod \; N\Big\},$$ and $W_N = \frac{1}{\sqrt{N}} \begin{pmatrix}
   0 & -1 \\ N & 0   
\end{pmatrix}$ is the Fricke involution. Set
$$
A_N(\tau)=\left(\frac{\eta(\tau)}{\eta(N\tau)}\right)^{\frac{12}{N-1}},
\;\;\;\text{then}\;\;\;
A_N(W_N\tau)=\frac{N^{\frac{6}{N-1}}}{A_N(\tau)}.
$$
It follows that
$
A_N(\tau)+\frac{N^{\frac{6}{N-1}}}{A_N(\tau)}
$
is invariant under $W_N$, while it changes sign under $\tau\mapsto\tau+1$ for $N=2,3$. Hence its square is invariant under $\Gamma_0(N)^+=\Gamma_0(N)\cup\Gamma_0(N)W_N$.
Normalizing the square so that
$$
J_{2N}(\tau_0)=0,\;\;\;\;\; J_{2N}(\tau_1)=1,\;\;\;\;\; J_{2N}(i\infty)=\infty.
$$
we obtain the Hauptmodul
\begin{equation}\label{haupt}
J_{2N}(\tau)=\frac{1}{4N^{\frac{6}{N-1}}}\left(A_N(\tau)+\frac{N^{\frac{6}{N-1}}}{A_N(\tau)}\right)^2.
\end{equation}
By Ligozat's criterion for eta quotients (see~\cite{ligozat1975courbes}), we can verify that $J_{2N}(\tau)$ is a modular function on $\Gamma_0(N)^{+}$ of level $2N$ and weight $0$. This is the chosen Hauptmodul for the triangle group $(2,2N,\infty)$.
\par For $(d,d,\infty) \leq (2,d,\infty)$, $d=3,4,6$, the quadratic transformation $15.8.20$ in~\cite{DLMF25} gives the corresponding Hauptmoduln $R_d$ such that $J_{2d} = 4R_d(1-R_d)$ with $J_3 = \frac{j}{1728}$. Another quadratic transformation $15.8.14$ in~\cite{DLMF25} provides Hauptmoduln $S_d$ for $(d,\infty,\infty)$, $d=2,3$, with $J_{2d}=\frac{S_d^2}{4(S_d-1)}$. These rational relations between the Hauptmoduln are due to the degree-$2$ coverings between the corresponding modular curves induced by the inclusions $(d,d,\infty)\leq (2,d,\infty)$.

% The corresponding values of ${}_2F_1(a,b,c;t(\tau))$ for these Hauptmoduln, along with the constants $c_1$ and $c_2$, are given in the following table. Note that the value of constant term is omitted in $\mathcal{W}(\tau)^{\frac{1}{2}}$ as we only require the value of the ratio $\Big(\dfrac{\mathcal{W}(\tau)}{\mathcal{W}(\tau_0)}\Big)^{\frac{1}{2}}$ \ft{[out of place, have not introduced $\mathcal{W}(\tau)$]}.
% Now we derive identities relating the Gauss hypergeometric function to modular functions. For triangle groups in Takeuchi’s class I, we consider $_2F_1(a,b,c;t(\tau))$ where $t$ is a suitable Hauptmodul. By pulling back the differential equation via $t$, we express the solutions in terms of modular objects. This leads to explicit formulas involving eta quotients, which will be used in studying algebraicity.
\par 
% {\cy can add some transiting sentence}
\subsection{Modularity of hypergeometric function}
Special hypergeometric functions often become modular forms under a suitable modular change of variable. Classical identities relating the Gauss hypergeometric function to modular forms appear in the work of Ramanujan. A basic example is the complete elliptic integral of the first kind
$$
K(k)=\int_0^1 \frac{dx}{\sqrt{(1-x^2)(1-k^2x^2)}} = \frac{\pi}{2}\;\;{}_2F_1\!\left(\frac{1}{2},\frac{1}{2},1;k^2\right).
$$
Define
$$
\tau = i\,\frac{K\!\left(\sqrt{1-k^2}\right)}{K(k)}.
$$
Then $k^2$ can be written as a function of $\tau$, namely
$$
k^2(\tau)=\lambda(\tau)=\frac{\theta_2(\tau)^4}{\theta_3(\tau)^4},
\;\;\;
\theta_2(\tau)=\sum_{n\in\mathbb{Z}} q_\tau^{\frac{1}{2}(n+\frac{1}{2})^2}, \;\;\;\;\;
\theta_3(\tau)=\sum_{n\in\mathbb{Z}} q_\tau^{\frac{1}{2}n^2}, \;\;\;\;\; q_\tau=e^{2\pi i \tau}.
$$
In this setting, we have the classical identity
$$
{}_2F_1\!\left(\frac{1}{2},\frac{1}{2},1;\lambda(\tau)\right)=\theta_3(\tau)^2
=
\frac{\eta(\tau)^{10}}{\eta(\tau/2)^4\,\eta(2\tau)^4}.
$$
We can use the Dedekind eta transformation formula
% $$
% \eta\!\left(\frac{a\tau+b}{c\tau+d}\right)
% =
% \varepsilon(\gamma)(c\tau+d)^{1/2}\eta(\tau),
% \;\;\;\;\;\; \gamma=
% \begin{pmatrix}
% a&b\\ c&d
% \end{pmatrix}.
% $$
(see~\cite[Theorem 5.8.1]{cohen2017modular}) to obtain
$$
{}_2F_1\!\left(\frac12,\frac12,1;\lambda(\gamma\tau)\right)
=
\left(\frac{-4}{d}\right)(c\tau+d)\,
{}_2F_1\!\left(\frac12,\frac12,1;\lambda(\tau)\right)\;\;\text{for}\;\;\gamma=
\begin{pmatrix}
a&b\\ c&d
\end{pmatrix} \in \Gamma_0(4).
$$
This expresses the hypergeometric function as a weight-1 modular form for $\Gamma_0(4)$ with multiplier $\left(\frac{-4}{d}\right)$. See~\cite[section 2.4]{li2018computing} for details. 
% This is not restricted to the above example. For suitable parameters $(a,b,c)$, we can choose a modular function $t(\tau)$ such that $_2F_1(a,b,c;t(\tau))$ admits an expression in terms of Dedekind's eta function.
% {\cy mention that is how you do the analytic continuation of $_2F_1$}
\par This example is particularly simple in that the hypergeometric function itself
becomes a modular form. In general, however, a single local solution
\({}_2F_1(a,b;c;t(\tau))\) need not be modular. As shown by
Yang~\cite[Theorem~9]{MR3011876}, automorphic forms on a triangle group are obtained from suitable algebraic factors and linear combinations of the two
local hypergeometric solutions. We now make this precise and apply it later in the setting of arithmetic triangle groups in Takeuchi’s class I. 
\par For the identities in the following theorem, the branch of the $_2F_1$ is fixed as follows. We choose the Schwarz triangle corresponding to the datum $(a,b,c)$ and the Hauptmodul $t(\tau)$ with $t(\tau_0)=0$. Near $\tau_0$, the function $t(\tau)$ maps a neighborhood of $\tau_0$ to a neighborhood of $z=0$, and the expression $_2F_1(a,b,c;t(\tau))$
is initially defined using the power series branch at $z=0$. We then continue this branch along paths inside the chosen Schwarz triangle. Since the Schwarz triangle is simply connected and does not contain the singular values $0,1,\infty$ except at its vertices, this determines a single-valued branch on the interior of the triangle. This is the branch used throughout the hypergeometric--modular identities. Equivalently, it is the branch satisfying the normalized value $_2F_1(a,b,c;t(\tau_0))=1$.

\begin{theorem}\label{thm:hgmodid}
Let $e_1, e_2, e_3 \in \N \cup \{\infty\}$. For a triangle group $\Gamma(e_1,e_2,e_3)$, 
% {\cy[Only I? It only relies on HG so far, change the assumption to what you used to get to the conclusion]} 
choose $a,b,c$ such that 
$$
\{|1-c|, |c-a-b|, |a-b|\} = \left\{\frac{1}{e_1}, \frac{1}{e_2}, \frac{1}{e_3}\right\}.
$$
Let $t$ be a Hauptmodul for $\Gamma =\Gamma(e_1,e_2,e_3)$
% Implicitly are you assuming $\Gamma$ is arithmetic? 
and let $0 \neq \tau_0=t^{-1}(0)$. Define 
$$
\mathcal{W}(\tau)=t'(\tau)t(\tau)^{-c}(1-t(\tau))^{c-a-b-1},
\;\;\;\;t'(\tau)=\frac{d t(\tau)}{d \tau},$$
and let \(\mathcal W(\tau_0)\) denotes the nonzero limiting value of the chosen local branch of
\(\mathcal W(\tau)\) as \(\tau\to\tau_0\), and the branch of the square root is chosen so that
\[
\left(\frac{\mathcal W(\tau)}{\mathcal W(\tau_0)}\right)^{1/2}
\to 1
\qquad\text{as}\qquad \tau\to\tau_0.
\] 
Then for $\tau$ in a sufficiently small simply connected neighborhood of $\tau_0$,
% {\cy in which range? Also say something about the choice of the branch for $(\cdot )^{\frac12}$}
% \begin{equation}\label{eq:hgmodid}
%  _2F_1\begin{bmatrix}
% \begin{matrix} a \\ \\ \end{matrix} & \begin{matrix} b \\ c \end{matrix} & ; \: t(\tau)
% \end{bmatrix}
% =
% \dfrac{c_1\tau+c_2}{c_1\tau_0+c_2}
% \Big(\dfrac{\mathcal{W}(\tau)}{\mathcal{W}(\tau_0)}\Big)^{\frac{1}{2}},
% \end{equation}
% where $c_1,c_2$ are explicit complex numbers, 
\begin{equation}\label{eq:hgmodid}
 _2F_1\begin{bmatrix}
\begin{matrix} a \\ \\ \end{matrix} & \begin{matrix} b \\ c \end{matrix} & ; \: t(\tau)
\end{bmatrix}
=
\dfrac{\tau+\alpha \tau_0}{\tau_0+\alpha \tau_0}
\Big(\dfrac{\mathcal{W}(\tau)}{\mathcal{W}(\tau_0)}\Big)^{\frac{1}{2}},
\end{equation}
where $\alpha \in \C$ is an explicit complex number. If $\Gamma(e_1,e_2,e_3)$ is a triangle group in Takeuchi Class I, $\alpha$ becomes a root of unity as given in Table \ref{table:identity}. 
\end{theorem}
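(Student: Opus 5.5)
The plan is to derive \eqref{eq:hgmodid} from the classical relation between the Schwarz map \eqref{Schwarz} and the Wronskian of $\text{HDE}(a,b,c;z)$. Then I would pin down the linear factor using the local exponents at $z=0$.

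\textbf{Step 1 (Wronskian).} By Abel's formula applied to \eqref{eq00}, any two solutions $f,g$ have Wronskian
$$
f g' - f' g = \kappa\, z^{-c}(1-z)^{c-a-b-1}
$$
for some constant $\kappa\neq 0$. Since $t$ is inverse to the Schwarz map, we may write $\tau = D(t(\tau)) = f_1(t)/g_1(t)$ for a suitable basis $f_1,g_1$; the basis is fixed up to $\mathrm{GL}_2$ by the normalization of the triangle. Differentiating gives
$$
1 = \frac{-\kappa\, t^{-c}(1-t)^{c-a-b-1}\, t'(\tau)}{g_1(t)^2},
$$
so $g_1(t(\tau))^2 = -\kappa\,\mathcal W(\tau)$. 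Hence $g_1(t(\tau)) = C\,\mathcal W(\tau)^{1/2}$ on any simply connected region avoiding the vertices, with an appropriate branch. Also $f_1(t(\tau)) = \tau\, g_1(t(\tau))$. Every solution is a combination of $f_1$ and $g_1$, so every branch of every solution, composed with $t$, has the form $(A\tau+B)\,\mathcal W(\tau)^{1/2}$. This applies in particular to the branch of ${}_2F_1(a,b,c;t(\tau))$ fixed above.

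\textbf{Step 2 (identifying the linear factor).} Near $z=0$ there are two local solutions: $F_1={}_2F_1(a,b,c;z)$ with exponent $0$, and $F_2=z^{1-c}{}_2F_1(a-c+1,b-c+1,2-c;z)$ with exponent $1-c=\pm 1/e_1$. The ratio $F_2/F_1$ is itself a Schwarz map. Near $\tau_0$ it is a conformal coordinate that vanishes exactly at $\tau_0$ and takes a hyperbolic disc around $\tau_0$ to a Euclidean disc around $0$. The Möbius transformation relating this ratio to $\tau$ must therefore send $\tau_0\mapsto 0$ and the reflected point $\overline{\tau_0}\mapsto\infty$. This gives
$$
\frac{F_2(t(\tau))}{F_1(t(\tau))} = k\,\frac{\tau-\tau_0}{\tau-\overline{\tau_0}}
$$
for some constant $k$. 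It follows that $F_1(t(\tau))$ is the "denominator" solution, namely a constant multiple of $(\tau-\overline{\tau_0})\,\mathcal W(\tau)^{1/2}$. Normalizing by $F_1(0)=1$, i.e.\ evaluating at $\tau=\tau_0$ with the square-root branch tending to $1$, gives \eqref{eq:hgmodid} with $\alpha\tau_0=-\overline{\tau_0}$, that is, $\alpha=-\overline{\tau_0}/\tau_0$. The hypothesis $\tau_0\neq0$ makes this well defined. I would also check that $\mathcal W(\tau_0)$ is a finite nonzero limit. This holds because $t \sim c\,\bigl((\tau-\tau_0)/(\tau-\overline{\tau_0})\bigr)^{e_1}$, so the factor $t'\,t^{-c}$ behaves like $t^{1-c-1/e_1}$ times a unit, and the exponents cancel once $|1-c|=1/e_1$ with the appropriate sign.

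\textbf{Step 3 (Class I).} I would read off $\tau_0$ from Table~\ref{table:2} and compute $\alpha=-\overline{\tau_0}/\tau_0$ for each case. For example, $\tau_0=(1+i)/2$ gives $\alpha=i$. For $\tau_0$ in $\Q(i)$ or $\Q(\zeta_3)$ of the listed shapes, $\overline{\tau_0}/\tau_0$ is a unit of norm one, hence a root of unity. This yields Table~\ref{table:identity}.

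\textbf{Main obstacle.} The hard part is the case where $e_1=\infty$ (i.e.\ $c=1$), so that $\tau_0$ is a real cusp, such as $\tau_0=1$ for $\lambda$. There the Möbius coordinate degenerates, and the local coordinate becomes $\exp\bigl(-2\pi i\,h/(\tau-\tau_0)\bigr)$ for a suitable width $h$. I would redo Step 2 with this coordinate. The $\log z$ solution then corresponds to $1/(\tau-\tau_0)$, and the holomorphic solution is proportional to $(\tau-\tau_0)\mathcal W^{1/2}$, with $\mathcal W$ now vanishing at the cusp. I would check that the quotient in \eqref{eq:hgmodid} still makes sense as a limit, or that the table permutes the vertices via the cross-ratio $S_3$-action so that $\tau_0$ is an elliptic point, which is the arrangement Table~\ref{table:2} seems designed for. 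A secondary point is the consistent choice of branches of $t^{-c}$, $(1-t)^{c-a-b-1}$, and the square root along the path in the Schwarz triangle.
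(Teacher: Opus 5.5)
Your proof is correct, and it determines $\alpha$ by a genuinely different route from the paper's.

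\textbf{How the paper does it.} The paper gets the shape $(c_1\tau+c_2)\mathcal W(\tau)^{1/2}$ from the normal form and the Schwarzian identity $\{\tau,z\}=2R(z)$. It proves $c_1\neq0$ from the nonconstant automorphy factor of an elliptic stabilizer $\gamma_0$ of $\tau_0$. It then finds $\alpha$ case by case: it computes the eta-multiplier $\varepsilon$ in $\mathcal W(\gamma_0\tau)^{1/2}=\varepsilon(c_0\tau+d_0)\mathcal W(\tau)^{1/2}$ and solves the resulting $2\times2$ linear system, as in the worked $\Gamma_0(2)$ example.

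\textbf{How your route differs.} Your Step 1 reaches the same shape more directly, by differentiating $\tau=f_1/g_1$ and using Abel's formula. Your Step 2 in effect solves the stabilizer relation once and for all. The map $L(\tau)\mapsto\varepsilon(c_0\tau+d_0)L(\gamma_0\tau)$ on linear polynomials has exactly two eigen-lines, $\tau-\tau_0$ and $\tau-\overline{\tau_0}$. Only the second is nonzero at $\tau_0$, which gives the closed formula $\alpha=-\overline{\tau_0}/\tau_0$ and makes $c_1\neq0$ automatic. The paper's own example agrees: $2\tau+i-1=2(\tau-\overline{\tau_0})$.

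\textbf{Justifying $\overline{\tau_0}\mapsto\infty$.} Your ``hyperbolic disc to Euclidean disc'' reason is better stated via monodromy. Going once around $z=0$ multiplies $F_2/F_1$ by $e^{2\pi i(1-c)}$ and acts on $\tau$ by $\gamma_0$. So the M\"obius map intertwines $\gamma_0$ with a rotation about $0$, and it must send the fixed pair $\{\tau_0,\overline{\tau_0}\}$ to $\{0,\infty\}$.

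\textbf{What each route buys.} The paper's route produces the explicit eta-quotient for $\mathcal W^{1/2}$ and its multiplier, which it uses later for algebraicity at CM points. Yours gives $\alpha$ uniformly, with no multiplier computations.

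\textbf{Cautions for Step 3.}
\begin{enumerate}
\item Norm one alone does not make an element of an imaginary quadratic field a root of unity; for example $(3+4i)/5$ has norm one. You need the direct check that $\overline{\tau_0}/\tau_0$ is an integral unit, which does hold for the listed $\tau_0$.
\item Your formula will not literally reproduce two rows of Table~\ref{table:identity}.
 \begin{itemize}
 \item For $\tau_0=(3+\sqrt{-3})/6$ in the $(3,\infty,\infty)$ row it gives $\zeta_3$, not $\zeta_6$. The same $\tau_0$ is assigned $\zeta_3$ in the $(6,6,\infty)$ and $(2,6,\infty)$ rows.
 \item For $\tau_0=-\overline{\zeta_3}=e^{\pi i/3}$ in the $(3,3,\infty)$ row it gives $\zeta_6$, not $i$. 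A direct check with $\gamma_0=\begin{pmatrix}1&-1\\1&0\end{pmatrix}$ and $\eta(\gamma_0\tau)^2=e^{-\pi i/3}\tau\,\eta(\tau)^2$ gives $\alpha\tau_0=-e^{-\pi i/3}$, confirming $\alpha=\zeta_6$.
 \end{itemize}
 Since $\alpha$ is forced by the eigen-line argument, these entries look like misprints in the table rather than a problem with your proof.
\item The cusp case in your last paragraph is excluded by the hypothesis that $\mathcal W(\tau_0)$ is a finite nonzero limit, and no table row needs it. Also, at a finite cusp $\mathcal W$ blows up like $(\tau-\tau_0)^{-2}$ rather than vanishing.
\end{enumerate}
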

\begin{proof}
For a given $(a,b,c)$, we recall $\text{HDE}(a,b,c;z)$~\eqref{eq00} satisfied by $_2F_1(a,b,c;z)$
\begin{equation}\label{hde}
y''(z) + P(z)y'(z) + Q(z)y(z) = 0,\;\;P(z)=\frac{c-(a+b+1)z}{z(1-z)},\;\;Q(z)=-\frac{ab}{z(1-z)}.
\end{equation}
The Wronskian of this differential equation is
$$
\mathcal{W}(z) = \lambda e^{-\int P(z)dz} = z^{-c}(1-z)^{c-a-b-1},
$$ for some constant $\lambda \in \C^{\times}$.
Take $z=t(\tau)$. Then, due to the chain rule
$$
\mathcal{W}(\tau) = \lambda t'(\tau) t(\tau)^{-c} (1-t(\tau))^{c-a-b-1}.
$$
The normal form of the above differential equation
$$
u''(z)+R(z)u(z)=0,\;\;\text{where}\;\;R(z)= Q(z)-\frac{P'(z)}{2}-\frac{P^2(z)}{4}
$$
has a solution
$$
u(z) = 
u(t(\tau))
=
y(t(\tau))
\left(\frac{t'(\tau)}{\mathcal W(\tau)}\right)^{1/2}.
$$
Viewing $\tau$ as a function of $z$, the Schwarzian derivative $\{\tau,z\}$ of $\tau$ with respect to $z$ defined as
$$
\{\tau,z\} = \frac{\tau'''(z)}{\tau'(z)}
-\frac{3}{2}\left(\frac{\tau''(z)}{\tau'(z)}\right)^2\;\;\;\text{satisfies}\;\;\; \{\tau,z\} = 2R(z).
$$
A final substitution
$$
v(\tau) = u(t(\tau))(t'(\tau))^{-\frac{1}{2}}
$$
gives
$$
v''(\tau)+\Big(R(t(\tau)) t'(\tau)^2 + \frac{1}{2} \{t,\tau\}\Big)v(\tau) = v''(\tau) = 0.
$$
since  the Schwarzian derivative,  $\{t,\tau\}=-2R(t(\tau))t'(\tau)^2$, follows from the chain rule. So the solution to this equation is $v(\tau)=c_1\tau+c_2$ for some complex constants $c_1$ and $c_2$. Substituting back, we obtain
$$
y(t(\tau)) = (c_1\tau+c_2)\mathcal{W}(\tau)^{\frac{1}{2}}.
$$
We first show that \(c_1\neq0\).  Let \(\gamma_0\in\Gamma\) be a
nontrivial stabilizer of the elliptic point \(\tau_0\). Since
\(t(\gamma_0\tau)=t(\tau)\), the left-hand side in \eqref{eq:hgmodid} is invariant under
\(\gamma_0\). On the other hand, since \(t\) is invariant under \(\Gamma\), we have
\[
t(\gamma_0\tau)=t(\tau),
\;\;\;\;
t'(\gamma_0\tau)
=
(c_0\tau+d_0)^2t'(\tau),
\qquad \text{for all}\;\;\;
\gamma_0=
\begin{pmatrix}
a_0&b_0\\ c_0&d_0
\end{pmatrix} \in \Gamma.
\]
Hence
\[
\mathcal W(\gamma_0\tau)
=
(c_0\tau+d_0)^2\mathcal W(\tau),\;\;\;\;
\mathcal W(\gamma_0\tau)^{1/2}
=
\varepsilon(c_0\tau+d_0)\mathcal W(\tau)^{1/2},
\]
for some constant \(\varepsilon\in\C^\times\), depending only on the choice of square-root branch.
Since \(\gamma_0\) is elliptic with
fixed point \(\tau_0\in\mathbb H\), we have \(c_0\neq 0\), and therefore
\(c_0\tau+d_0\) is not constant. Thus the linear factor \(c_1\tau+c_2\) cannot be
constant; otherwise it could not cancel the nonconstant automorphy factor of
\(\mathcal W(\tau)^{1/2}\). Hence \(c_1\neq0\).
\newline Near $t = 0$, we consider the solution $_2F_1(a,b,c;t(\tau))$ of \eqref{hde}. Since $_2F_1(a,b,c;0)=1$ and \(t(\tau_0)=0\), after normalization we obtain
\begin{equation}\label{eq2F1}
 _2F_1\begin{bmatrix}
\begin{matrix} a \\ \\ \end{matrix} & \begin{matrix} b \\ c \end{matrix} & ; \: t(\tau)
\end{bmatrix}
=
\dfrac{c_1\tau+c_2}{c_1\tau_0+c_2}
\Big(\dfrac{\mathcal{W}(\tau)}{\mathcal{W}(\tau_0)}\Big)^{\frac{1}{2}}.
\end{equation}
Therefore, since \(c_1\neq0\) and \(\tau_0\neq0\), we may write
$c_1\tau+c_2=c_1\left(\tau+\alpha\tau_0\right)$ for $\alpha=\frac{c_2}{c_1\tau_0}$.
Then
\[
\frac{c_1\tau+c_2}{c_1\tau_0+c_2}
=
\frac{\tau+\alpha\tau_0}{\tau_0+\alpha\tau_0}.
\]
For the Class I cases considered below, the values of \(\alpha\) are roots of unity. They are computed
from the corresponding stabilizer relation and are listed in
Table~\ref{table:identity}.
\end{proof}
Table~\ref{table:identity} gives the resulting hypergeometric-modular identities for the class I cases considered here, including the chosen Hauptmodul, the point $\tau_0$, the constant $\alpha$, and the corresponding square-root Wronskian factor.  

\begin{center}
\begin{table}[H]
{ 
\renewcommand{\arraystretch}{1.8} 
\begin{tabular}{ |c|c|c|c|c|c| }
\hline 
Triangle group & $(a,b,c)$ & $t$& $\tau_0$ & $\alpha$ & $\mathcal{W}(\tau)^{\frac{1}{2}}$ \\
\hline
$(2,\infty,\infty)$ & $(\frac{1}{4}, \frac{1}{4}, \frac{1}{2})$ & $S_2$ & $\frac{1+i}{2}$& $i$ & $\frac{\eta^4(2\tau)}{\eta^2(\tau)}$ \\
\hline 
$(3,\infty,\infty)$ & $(\frac{1}{3}, \frac{1}{3}, \frac{2}{3})$ & $S_3$ & $\frac{3+\sqrt{-3}}{6}$& $ \zeta_6$ & $\frac{\eta^3(3\tau)}{\eta(\tau)}$ \\
\hline
$(3,3,\infty)$ & $(\frac{1}{6}, \frac{1}{6}, \frac{2}{3})$ & $R_3$ & $-\bar{\zeta_3}$& $i$ & $\eta^2(\tau)$ \\
\hline 

$(4,4,\infty)$ & $(\frac{1}{4}, \frac{1}{4}, \frac{3}{4})$ & $R_4$ & $\frac{1+i}{2}$ & $i$ & $\eta(\tau)\eta(2\tau)$ \\
\hline
$(6,6,\infty)$ & $(\frac{1}{3}, \frac{1}{3}, \frac{5}{6})$ & $R_6$ & $\frac{3+\sqrt{-3}}{6}$ &  $\zeta_3$   & $\eta(\tau)\eta(3\tau)$ \\
\hline
\multirow{2}{*}{$(2, 4, \infty)$} 
%\cline{2-7} 
&  $(\frac{1}{8}, \frac{1}{8}, \frac{1}{2})$ & $1-J_4$& $\frac{i}{\sqrt{2}}$ & 1 & $\eta(\tau)\eta(2\tau)$ \\
&  $(\frac{1}{8}, \frac{1}{8}, \frac{3}{4})$ & $J_4$& $\frac{1+i}{2}$ & $i$ & $\eta(\tau)\eta(2\tau)$ \\
\hline
\multirow{2}{*}{$(2, 6, \infty)$}
%\cline{2-7} 
&  $(\frac{1}{6}, \frac{1}{6}, \frac{1}{2})$ & $1-J_6$& $\frac i{\sqrt 3}$ & 1 & $\eta(\tau)\eta(3\tau)$ \\
 &  $(\frac{1}{6}, \frac{1}{6}, \frac{5}{6})$ & $J_6$& $\frac{3+\sqrt{-3}}{6}$ & $\zeta_3$  & $\eta(\tau)\eta(3\tau)$ \\
\hline 
\end{tabular}
}
\vspace{0.5cm}
\caption{Hypergeometric Modular Identities for Class I}
\label{table:identity}
\end{table}
\end{center}

% {\cy [relate previous theorem to Ramanujan or weight-1 modular forms for Case I, which is Table 2.]}
\begin{example}
We illustrate the computation of the square-root Wronskian factor and the constant $\alpha$ corresponding to the first row of the above table where the triangle group is $\Gamma_0(2) = (2,\infty,\infty)$. Choose $(a,b,c)=\left(\frac14,\frac14,\frac12\right)$ such that 
$$\{|1-c|, |c-a-b|, |a-b|\} = \left\{\frac{1}{2}, 0, 0\right\}.$$
The Hauptmodul is chosen from Table~\ref{table:2} as follows.
$$
t(\tau)=S_2(\tau), \;\;\;\;\;\; \tau_0=\frac{1+i}{2}.
$$
For this datum, the pulled-back Wronskian is
$$
\mathcal{W}(\tau)
=
S_2'(\tau)\,S_2(\tau)^{-1/2}(1-S_2(\tau))^{-1}.
$$
% Since
% $$
% S_2(\tau)=E_{2,2}(\tau)^2\frac{\eta(\tau)^8}{64\eta(2\tau)^{16}},
% $$
% and $E_{2,2}(\tau)$ has constant term $1$, we have
% $$
% \operatorname{ord}_{i\infty}(S_2)=-1.
% $$
% Hence
% $$
% \operatorname{ord}_{i\infty}(S_2')=-1,\;\;\;\;\;\;
% \operatorname{ord}_{i\infty}(S_2^{-1/2})=\frac12,\;\;\;\;\;\;
% \operatorname{ord}_{i\infty}((1-S_2)^{-1})=1.
% $$
% Therefore
% $$
% \operatorname{ord}_{i\infty}(\mathcal{W})=\frac12,
% \;\;\;\;\;\;
% \operatorname{ord}_{i\infty}(\mathcal{W}^{1/2})=\frac14.
% $$
% Now, $S_2'(\gamma\tau)=(c\tau+d)^2S_2'(\tau)$, implies that
% $$
% \mathcal W(\gamma\tau)
% =
% S_2'(\gamma\tau)\,S_2(\gamma\tau)^{-1/2}(1-S_2(\gamma\tau))^{-1}
% =
% (c\tau+d)^2\,\mathcal W(\tau).
% $$
We can verify that $\mathcal W(\tau)$ is an modular form of weight $2$ on $\Gamma_0(4)$.
% and $\mathcal W(\tau)^{1/2}$ is an automorphic form of weight $1$ for a fixed choice of square-root branch. The space of weight-$1$ automorphic forms with this multiplier is one-dimensional. Since $\mathcal{W}(\tau)^{1/2}$ has weight $1$ and order $\frac14$ at $i\infty$,  by Ligozat’s theorem by we have 
% The space of such forms is generated by $\theta_2(\tau)^4$ and $\theta_3(\tau)^4$. 
By~\cite[Section 2.4]{li2018computing} and~\cite[Appendix]{allen2025explicit}, we obtain
$$
\mathcal W(\tau)=\pi i\,\theta_2(\tau)^4\;\;\;\text{where}\;\;\;
\theta_2(\tau)=2\frac{\eta(2\tau)^2}{\eta(\tau)}.
$$
So, if we fix a choice of the branch of the square root,
$$
\mathcal W(\tau)^{1/2}
=
(16\pi i)^{1/2}\frac{\eta(2\tau)^4}{\eta(\tau)^2}.
$$
We now compute $c_1$ and $c_2$. Since $\tau_0=\frac{1+i}{2}$ is an elliptic point of $\Gamma_0(2)$, its stabilizer can be chosen as
$$
\gamma_0=
\begin{pmatrix}
1 & -1\\
2 & -1
\end{pmatrix},
\;\;\;\;\;\; \text{as }\quad 
\gamma_0(\tau_0)=\tau_0.
$$
Then we have
$$
{}_2F_1\!\left(\frac14,\frac14,\frac12;S_2(\gamma_0\tau_0)\right)
=
{}_2F_1\!\left(\frac14,\frac14,\frac12;S_2(\tau_0)\right).
$$
Using \eqref{eq2F1} write
\begin{equation}\label{eqexample}
{}_2F_1\!\left(\frac14,\frac14,\frac12;S_2(\tau)\right)
=
\frac{c_1\tau+c_2}{c_1\tau_0+c_2}
\frac{\eta(2\tau)^4/\eta(\tau)^2}
{\eta(2\tau_0)^4/\eta(\tau_0)^2}.
\end{equation}
Since the denominator is independent of $\tau$, it does not affect the transformation relation used to determine $c_1$ and $c_2$.
The invariance of the left-hand side given in~\eqref{eqexample} under $\gamma_0$ and the following transformation of the eta quotient under $\gamma_0$ in the right side give a relation between the linear factor $c_1\tau+c_2$.
Since, $$
\eta(\gamma_0\tau)=e^{\pi i/4}(2\tau-1)^{1/2}\eta(\tau),
\;\;\;
\text{and}
\;\;\;
\eta(2\gamma_0\tau)=-i(2\tau-1)^{1/2}\eta(2\tau).
$$
we have
$$
\frac{\eta(2\gamma_0\tau)^4}{\eta(\gamma_0\tau)^2}
=
-i(2\tau-1)\frac{\eta(2\tau)^4}{\eta(\tau)^2}.
$$
Substituting this into the invariance relation gives
$
(c_1\gamma_0\tau+c_2)(-i)(2\tau-1)=c_1\tau+c_2.
$
We now have
$$
-i\bigl(c_1(\tau-1)+c_2(2\tau-1)\bigr)=c_1\tau+c_2\;\;\;\text{since}\;\;\;
\gamma_0\tau=\frac{\tau-1}{2\tau-1}.
$$
Comparing coefficients of $\tau$ and the constant terms, we obtain
$
-i(c_1+2c_2)=c_1$, and $i(c_1+c_2)=c_2.
$
Solving this system gives
$
c_1=2$, and $c_2=i-1,
$
up to a common nonzero scalar, which does not affect the normalized ratio
$
\frac{c_1\tau+c_2}{c_1\tau_0+c_2}.
$
Thus, for the first row of Table~\ref{table:identity}, we obtain
$$
\mathcal{W}(\tau)^{1/2}=\frac{\eta(2\tau)^4}{\eta(\tau)^2},
\;\;\;\;\;\;
c_1=2,
\;\;\;\;\;\;
c_2=i-1,
\;\;\;\;\;\;\alpha=\frac{c_2}{c_1 \tau_0} = i.$$
$$
{}_2F_1\!\left(\frac14,\frac14,\frac12;S_2(\tau)\right)
=
\frac{2\tau+i-1}{2i}
\frac{\eta(2\tau)^4/\eta(\tau)^2}
{\eta(2\tau_0)^4/\eta(\tau_0)^2} = \frac{\tau + i\tau_0}{\tau_0 + i\tau_0}
\frac{\eta(2\tau)^4/\eta(\tau)^2}
{\eta(2\tau_0)^4/\eta(\tau_0)^2}.
$$
% \ft{
% $$
% {}_2F_1\!\left(\frac14,\frac14,\frac12;S_2(\tau)\right)
% =
% (-i\tau+\tau_0)
% \frac{\eta(2\tau)^4/\eta(\tau)^2}
% {\eta(2\tau_0)^4/\eta(\tau_0)^2}.
% $$}
\end{example}

% \par First, we consider $(2,2N,\infty)$ for $N\in\{2,3\}$. Let
% $$
% A_N(\tau)=\left(\frac{\eta(\tau)}{\eta(N\tau)}\right)^{\frac{12}{N-1}}.
% $$
% Then $A_N$ satisfies
% $$
% A_N(W_N\tau)=\frac{N^{\frac{6}{N-1}}}{A_N(\tau)},
% $$
% where $W_N$ is the Fricke involution.
% Hence
% $$
% A_N(\tau)+\frac{N^{\frac{6}{N-1}}}{A_N(\tau)}
% $$
% is invariant under $W_N$. However, it changes sign under the action $\tau\mapsto \tau+1$ for $N=2,3$. Therefore, its square is invariant under both $\Gamma_0(N)$ and $W_N$, and hence under
% $$
% \Gamma_0(N)^+=\Gamma_0(N)\cup \Gamma_0(N)W_N.
% $$
% To get the values
% $$J_{2N}(\tau_0)=0,\;\;\;\;\;\; J_{2N}(\tau_1)=1,\;\;\;\;\;\; J_{2N}(i\infty)=\infty$$
% we obtain
% \begin{equation}\label{haupt}
% J_{2N}(\tau)= \frac{1}{4N^{\frac{6}{N-1}}} \left(A_N(\tau)+\frac{N^{\frac{6}{N-1}}}{A_N(\tau)}\right)^2.
% \end{equation}
% More precisely, for $N=2,3$, it is normalized by
% $$
% J_{2N}(\tau_0)=0,\;\;\;\;\;\; J_{2N}(\tau_1)=1,\;\;\;\;\;\; J_{2N}(i\infty)=\infty,
% $$
% where
% $$
% \tau_0=
% \begin{cases}
% \dfrac{1+i}{2}, & N=2,\\[6pt]
% \dfrac{2+\zeta_3}{3}, & N=3,
% \end{cases}
% \;\;\;\;\;\;
% \tau_1=
% \begin{cases}
% \dfrac{i}{\sqrt{2}}, & N=2,\\[6pt]
% \dfrac{i}{\sqrt{3}}, & N=3.
% \end{cases}
% $$

% \footnotetext{upto some constant.}

\section{Exceptional Sets for Takeuchi's Class I} \label{sec:ex1}
We now determine the exceptional sets $E(a,b,c)$ for all data $(a,b,c)$ corresponding to arithmetic triangle groups in Takeuchi’s class I. The main idea is to combine the modular identities obtained in the previous section with results on the algebraicity of periods. 
\par We expressed the hypergeometric function in terms of modular functions evaluated at points in the upper half-plane. This allows us to reduce the problem to studying the algebraicity of certain modular functions at CM points. The forward direction follows from classical results on modular functions, while the converse requires transcendence results for periods of abelian varieties.
\begin{comment}
\par Schneider proved a classical result (See~\cite{Schneider1937}) that for any $\tau \in \overline{\Q}$, if $\tau$ is a CM point, the modular $j$-invariant $j(\tau) \in \overline{\Q}$. 
% {\cy[explain $J$]} 
See~\cite[Proposition 5.10.3.]{cohen2017modular},~\cite[Theorem 2.6.1]{silverman2013advanced}. This can be generalized for any Hauptmoduln $t$ in Table~\ref{table:2} as follows.
\begin{lemma} \label{l2}
Let $\tau \in \overline{\Q} \cap \UH$, then $t(\tau) \in \overline{\Q}$ if $\tau$ is a CM point, for any Hauptmodul $t$ in Table~\ref{table:2}
\end{lemma}
\begin{proof} 
Let $\tau \in \overline{\Q}$ be a CM point. Because of the algebraic relations between the Hauptmoduln, it is sufficient to show that $J_{2N}(\tau)$ is algebraic for $N= 2,3$. $A_N(\tau)$ is algebraic since~\cite[Theorem 12.2.2]{lang1987elliptic} proves the algebraicity of $\frac{\eta(\frac{\alpha\tau + \beta}{\delta})}{\eta(\tau)}$ for $\alpha,\beta,\delta \in \Z$ with $\alpha\delta >0$. Due to \eqref{haupt}, $J_{2N}(\tau)$ is algebraic for $N = 2,3$. \end{proof}
\par The algebraicity of $S_2(\tau)$ and $S_3(\tau)$ also follows from~\cite[Corollary 5.10.4 ]{cohen2017modular}. This lemma shows that the chosen Hauptmodul takes algebraic values at CM points. Using the modular identities obtained in Section 3, this implies that the corresponding values of the hypergeometric function are algebraic. This yields the forward inclusion in the description of the exceptional set.
\end{comment}

\begin{lemma}[Schneider \cite{Schneider1937}] \label{l2}
Let $\tau \in \overline{\Q} \cap \UH$, then $t(\tau) \in \overline{\Q}$ if $\tau$ is a CM point, for any Hauptmodul $t$ in Table~\ref{table:2}. 
\end{lemma}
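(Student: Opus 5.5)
The plan is to reduce everything to the classical statement that $j(\tau)\in\overline{\Q}$ whenever $\tau\in\UH$ is a CM point, that is, $\tau$ is quadratic imaginary. This is Schneider's theorem; see also the theory of complex multiplication, where $j(\tau)$ generates a ring class field. Since $J_3=j/1728$, the case of $(2,3,\infty)$ is immediate. For every other Hauptmodul $t$ in Table~\ref{table:2}, I will show that $t$ is algebraic over $\Q(j)$. In each case $t$ satisfies a polynomial equation $F(t,j)=0$ with $F\in\overline{\Q}[X,Y]$, monic in $X$. Then $t(\tau)$ is a root of $F(X,j(\tau))$, which has algebraic coefficients whenever $j(\tau)$ is algebraic, so $t(\tau)$ is algebraic (or infinite at poles, which occur only at cusps and so do not arise for $\tau\in\UH$).

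The key step is to produce these relations using the rational and quadratic relations already recorded in Table~\ref{table:2}. For the groups $(3,3,\infty)$, $(4,4,\infty)$ and $(6,6,\infty)$, the relation $4R_d(1-R_d)=J_{2d}$ (with the convention $J_{2\cdot 3}$ read as $J_3$ for $d=3$) shows that $R_d$ is a root of a quadratic over $\Q(J_{2d})$. For $(2,\infty,\infty)$ and $(3,\infty,\infty)$, the relation $J_{2d}=S_d^2/(4(S_d-1))$ shows that $S_d$ is a root of a quadratic over $\Q(J_{2d})$. Since algebraicity is transitive, it therefore suffices to treat $J_4$ and $J_6$, together with the modular $\lambda$ function.

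For $\lambda$ I will use the classical relation $j=256(1-\lambda+\lambda^2)^3/(\lambda^2(1-\lambda)^2)$. For $J_{2N}$ with $N=2,3$, I will use that $\Gamma_0(N)^+$ contains $\Gamma_0(N)$, which has finite index in $\PSL_2(\Z)$. Hence $J_{2N}$ is a rational function of $j(\tau)$ and $j(N\tau)$, and $j(N\tau)$ is a root of the classical modular polynomial $\Phi_N(X,j(\tau))\in\Z[X,j]$. Explicitly, the quantity $A_N$ in \eqref{haupt} is a root of a polynomial over $\Z[j]$. For example, $j=(A_2+256)^3/A_2^2$ for $N=2$, and $j=(A_3+27)(A_3+243)^3/A_3^3$ for $N=3$. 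It follows that $J_{2N}$ is algebraic over $\Q(j)$.

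An alternative route, avoiding modular polynomials, is to cite the algebraicity of $\eta((\alpha\tau+\beta)/\delta)/\eta(\tau)$ at CM points from \cite{lang1987elliptic}. This gives $A_N(\tau)\in\overline{\Q}$ directly, and then $J_{2N}(\tau)$ is algebraic by \eqref{haupt}. Both $\tau$ and $N\tau$ are CM points, so either route works.

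The main obstacle is making sure every relation used holds identically as an algebraic relation between functions, not just locally or up to a branch. This matters especially for $R_d$, which is defined through a square root: one must check that $R_d$ is a genuine modular function for the index-two subgroup and so satisfies the quadratic globally. Once that is verified, the remainder is the transitivity of algebraic closure.
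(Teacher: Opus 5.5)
Your proposal is correct and follows essentially the paper's argument: reduce to the algebraicity of $j(\tau)$ at CM points via the algebraic relations between each Hauptmodul in Table~\ref{table:2} and $j$. You simply spell this out in more detail, and your alternative via Lang's eta-quotient result is the tool the paper itself uses in the proof of Theorem~\ref{tforward}. Two small slips do not affect the logic: with the paper's normalization $A_N=(\eta(\tau)/\eta(N\tau))^{12/(N-1)}$, your displayed relations hold for $A_N^2$ rather than $A_N$ (correctly, $j=(A_2^2+256)^3/A_2^4$ and $j=(A_3^2+27)(A_3^2+243)^3/A_3^6$), and the quadratic relation for $R_d$ is $4R_d(1-R_d)=J_d$, where $J_d$ is the Hauptmodul of $(2,d,\infty)$; this relation holds identically for either branch of the square root, so the ``obstacle'' you flag does not actually arise.
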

\begin{proof} 
    The results follow form the algebraic relation of the elliptic $j$-function and the Hauptmodul in Table~\ref{table:2}, and a classical result of Schneider that  for any $\tau \in \overline{\Q}$, if $\tau$ is a CM point then $j(\tau) \in \overline{\Q}$. See also \cite[Proposition 5.10.3, Corollary 5.10.4]{cohen2017modular} and~\cite[Theorem 2.6.1]{silverman1994advanced}. 
\end{proof}
This lemma shows that the chosen Hauptmodul takes algebraic values at CM points. Using the modular identities obtained in Section~\ref{sec:hypergeom}, this implies that the corresponding values of the hypergeometric function are algebraic. This yields the forward inclusion in the description of the exceptional set.

\begin{theorem} \label{tforward}
If $z = t(\tau_d)$ for $t$ and $\tau_d \in \Q(\sqrt{-d}) \cap \UH$ as in the Table~\ref{table:main}, then $z \in E(a,b,c)$. 
\end{theorem}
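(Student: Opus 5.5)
The plan is to combine Lemma~\ref{l2} with the hypergeometric--modular identity of Theorem~\ref{thm:hgmodid}, in the form recorded in Table~\ref{table:identity}. Let $\tau_d \in \Q(\sqrt{-d})\cap\UH$ and put $z=t(\tau_d)$.

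\textbf{Step 1: $z$ is algebraic.} Every element of $\Q(\sqrt{-d})\cap\UH$ is a CM point, since it satisfies a quadratic equation over $\Q$ with negative discriminant. Lemma~\ref{l2} then gives $z=t(\tau_d)\in\overline{\Q}$. Points where $t(\tau_d)\in\{1,\infty\}$, or where the branch is not defined, are either excluded or handled by direct inspection. For the data with $c\ge 1$ (Theorem~\ref{tmain}), Table~\ref{table:main} only records $z=0$, which is trivial.

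\textbf{Step 2: reduce to a ratio of modular forms.} By $\Gamma$-equivariance, choose $\tau$ in the closure of the chosen Schwarz triangle with $t(\tau)=z$. We may assume this representative still lies in $\Q(\sqrt{-d})$: the groups in Class~I have entries in $\Q(\sqrt{2})$ or $\Q(\sqrt 3)$, scaled as in $W_N$, and they preserve the imaginary quadratic field attached to the level. Next, continue \eqref{eq:hgmodid} from the neighbourhood of $\tau_0$ to the whole triangle. Both sides solve the same pulled-back equation $v''=0$, so the identity persists by analytic continuation. This gives
\[
{}_2F_1(a,b,c;z)=\frac{\tau+\alpha\tau_0}{\tau_0+\alpha\tau_0}\cdot\frac{\mathcal W(\tau)^{1/2}}{\mathcal W(\tau_0)^{1/2}}.
\]
\begin{itemize}
\item The first factor lies in $\Q(\sqrt{-d},\alpha)\subset\overline{\Q}$. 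This holds because $\tau$, $\tau_0$ and the root of unity $\alpha$ in Table~\ref{table:identity} all lie in $\Q(\sqrt{-d},\zeta_{12})$; one must check case by case that $\tau_0\in\Q(\sqrt{-d})$ for the $d$ in Table~\ref{table:main}.
\item The second factor is a ratio $f(\tau)/f(\tau_0)$ of a single weight-one eta product $f$, listed in the last column of Table~\ref{table:identity}.
\end{itemize}

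\textbf{Step 3 (the main obstacle): algebraicity of $f(\tau)/f(\tau_0)$ at two CM points of the same field.} My first approach is Chowla--Selberg. For a CM point $\tau$ of discriminant in $\Q(\sqrt{-d})$ and a weight-$k$ modular form $f$ with algebraic Fourier coefficients, $f(\tau)=(\text{algebraic})\cdot\Omega_{d}^{k}$, where $\Omega_d$ is the Chowla--Selberg period depending only on the field. Equivalently, one can argue directly:
\begin{itemize}
\item $\eta(\tau)^{24}/\Omega_d^{12}$ is algebraic, because $\Delta(\tau)$ is an algebraic multiple of $\Omega_d^{12}$ at CM points.
\item Quotients $\eta(m\tau)/\eta(\tau)$ are algebraic, by the Lang reference cited in the commented proof.
\end{itemize}
Hence $f(\tau)^{24}$ and $f(\tau_0)^{24}$ are both algebraic multiples of $\Omega_d^{12}$. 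Their ratio is therefore algebraic, and taking a 24th root keeps it in $\overline{\Q}$. The delicate point is that $\tau$ and $\tau_0$ must have CM by orders in the \emph{same} field $\Q(\sqrt{-d})$, so that the transcendental periods cancel. This is exactly why $d$ is tied to $\tau_0$ in Table~\ref{table:main}: $d=1$ when $\tau_0=\frac{1+i}{2}$, and $d=3$ when $\tau_0$ involves $\sqrt{-3}$.

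The case $d=2$ needs separate care. It arises for the data $(\frac18,\frac18,\frac12)$ with $1-J_4$ and $\tau_0=\frac{i}{\sqrt2}$, where $\tau_0\notin\Q(\sqrt{-2})$ literally. There I would rescale by the Fricke normalization, replacing $\tau$ by $\sqrt2\,\tau$ so that all points lie in $\Q(\sqrt{-2})$, and rerun the period comparison.

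Multiplying the two algebraic factors then gives ${}_2F_1(a,b,c;z)\in\overline{\Q}$, so $z\in E(a,b,c)$.
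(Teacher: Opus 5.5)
Your proof follows the paper's. Lemma~\ref{l2} gives $z\in\overline{\Q}$. In the identity of Theorem~\ref{thm:hgmodid}, the linear factor is algebraic because $\tau_d,\tau_0\in\Q(\sqrt{-d})$ and $\alpha$ is a root of unity, and the Wronskian factor is a ratio of eta products at points of one imaginary quadratic field. For that ratio the paper writes $\tau_d=(m\tau_0+n)/k$ with $m,n,k\in\Z$, $mk>0$, and cites Lang for the algebraicity of $\eta((m\tau+n)/k)/\eta(\tau)$. Your Chowla--Selberg route is equivalent. One slip: for a weight-one $f$ you get $f^{24}\in\overline{\Q}\,\Omega_d^{24}$, not $\Omega_d^{12}$, but the exponent cancels in the ratio.

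The one genuinely wrong step is your $d=2$ paragraph. In fact $\frac{i}{\sqrt2}=\frac{\sqrt{-2}}{2}$ does lie in $\Q(\sqrt{-2})$. So the datum $(\frac18,\frac18,\frac12)$ with $t=1-J_4$ is covered by your general argument as it stands. Drop the proposed rescaling $\tau\mapsto\sqrt2\,\tau$: it would break the argument, sending $\frac{\sqrt{-2}}{2}$ to $i$ and $\sqrt{-2}$ to $2i$, i.e.\ out of the field you need.

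Two smaller points.
\begin{enumerate}
\item The closed Schwarz triangle only covers $z$ in one closed half-plane, so your Step~2 reduction fails for the other half. No reduction is needed, which is why the paper plugs $\tau_d$ in directly. The right-hand side of \eqref{eq:hgmodid} is single-valued on $\UH$, so every branch of ${}_2F_1(a,b,c;z)$ equals that right-hand side at some preimage of $z$. All preimages of $t(\tau_d)$ lie in $\Q(\sqrt{-d})$, because the Class~I groups act by rational M\"obius transformations.
\item The rows of Table~\ref{table:main} missing from Table~\ref{table:identity} (e.g.\ $(\frac12,\frac12,\frac23)$, $(\frac16,\frac12,\frac23)$) are not covered by the last column you cite. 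They reduce to listed rows by Euler's and Pfaff's transformations, which contribute only algebraic factors at algebraic $z\neq 1$. The paper also leaves this step implicit.
\end{enumerate}
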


% {\cy[Typically use
% \begin{proof}
    
% \end{proof}]}
\begin{proof} Since $\tau_d$ is a CM point, $z$ is algebraic by Lemma~\ref{l2}. Now, let $\tau_0 = t^{-1}(0)$. Then $\tau_0\in  \Q(\sqrt{-d})$ and thus
$$ _2F_1\begin{bmatrix}
\begin{matrix} a \\ \\ \end{matrix} & \begin{matrix} b \\ c \end{matrix} & ; \: t(\tau_d) 
\end{bmatrix}= \dfrac{c_1\tau_d+c_2}{c_1\tau_0+c_2} \Big(\dfrac{\mathcal{W}(\tau_d)}{\mathcal{W}(\tau_0)}\Big)^{\frac{1}{2}}$$ is algebraic,  % since, the algebraicity of $\frac{\mathcal{W}(\tau_d)}{\mathcal{W}(\tau_0)}$ 
% {\cy[explain what is $\tau_0$]}
which follows from the algebraicity of $\frac{\eta(\frac{\alpha\tau + \beta}{\delta})}{\eta(\tau)}$ for $\alpha,\beta,\delta \in \Z$ with $\alpha\delta >0$,~\cite[ Theorem 12.2.2]{lang1987elliptic}. 
\end{proof}
\begin{example} Consider the datum $(\frac{1}{3}, \frac{1}{3}, \frac{5}{6})$ associated to the triangle group $(6,6,\infty)$. For any $\tau_d \in \Q(\sqrt{-d}) \cap \UH$ where $d= 3$, $\tau_d$ can be expressed as $\frac{\alpha\sqrt{-3}+\beta}{\delta}$ with $\alpha,\beta,\delta \in \Z$ and $\alpha\delta >0$. For all such $\tau_d$, $\eta(\tau_d) \in \overline{\Q} \cdot \eta(\sqrt{-3})$ and hence,
$$\dfrac{\eta(\tau_d) \eta(3\tau_d)}{\eta(\frac{3+\sqrt{-3}}{6}) \eta(\frac{3+\sqrt{-3}}{2})} \in \overline{\Q}.$$ 
\end{example}
\subsection*{Proof of Theorem~\ref{tmain}}
Due to \eqref{euler}, we have
$$
\hyp(a,b,c,z) = \dfrac{\int_0^1 \omega}{B(b,c-b)}, \quad \mbox{where } \omega= x^{b-1} (1-x)^{c-b-1} (1-zx)^{-a} dx. 
$$
The algebraicity of $_2F_1(a,b,c;z)$ depends on the transcendence of the quotient of periods. Wolfart and W\"ustholz proved algebraic relations between the periods on simple abelian varieties. 
% \ft{\sout{Archinard constructs these abelian varieties in~\cite{archinard2000abelian} and they are given in section 2.}} 

\begin{theorem}[W\"ustholz's Theorem~\cite{wolfart1985uberlagerungsradius}] \label{theoremwustholz}
Let $T_1, T_2, \dots, T_n$ be pairwise nonisogenous simple abelian varieties defined over $\overline{\Q}$. Let $k_j$ be the dimension of $T_j$ for $1 \leq j \leq n$. Suppose $\{\omega^{(j)}_k\}_{1\le k\le k_j}$ and $\{\eta^{(j)}_k\}_{1\le k\le k_j}$ be the bases of differential forms on $T_j$ of first and second kind respectively. Then, for any arbitrary choice of non-zero $\gamma_j \in H_1(T_j, \Z)$, 
$$
\dim_{\overline{\Q}}\!\Big\{2\pi i,\; \int_{\gamma_j}\omega^{(j)}_k,\; \int_{\gamma_j}\eta^{(j)}_k \;\Big|\; 1\le j\le n,\; 1\le k\le k_j\Big\}
= 1 + 2\sum_{j=1}^n k_j.
$$
\end{theorem}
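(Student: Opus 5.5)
The plan is to deduce the statement from W\"ustholz's Analytic Subgroup Theorem, applied to a suitable commutative algebraic group defined over $\overline{\Q}$. For each $j$ let $E_j := E(T_j)$ be the universal vector extension of $T_j$. It is an extension of $T_j$ by $\mathbb{G}_a^{k_j}$ and has dimension $2k_j$. Its Lie algebra is identified with the dual of $H^1_{\mathrm{dR}}(T_j)$, which is spanned by the classes of the $\omega^{(j)}_k$ and the $\eta^{(j)}_k$. Under this identification the kernel of $\exp_{E_j}$ is exactly the set of period vectors
$$u_j(\gamma)=\Big(\int_{\gamma}\omega^{(j)}_k,\int_\gamma\eta^{(j)}_k\Big)_{k}, \qquad \gamma\in H_1(T_j,\Z).$$
Set $G:=\mathbb{G}_m\times\prod_j E_j$, so that $\dim G = 1+2\sum_j k_j$. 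Let $u:=(2\pi i, u_1(\gamma_1),\dots,u_n(\gamma_n))\in \mathrm{Lie}(G)(\C)$. Then $\exp_G(u)$ is the identity, which is an algebraic point.

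Suppose the displayed numbers satisfy a nontrivial $\overline{\Q}$-linear relation. Then $u$ lies in a proper $\overline{\Q}$-rational hyperplane of $\mathrm{Lie}(G)$. The Analytic Subgroup Theorem gives a connected algebraic subgroup $H\subseteq G$, defined over $\overline{\Q}$, with $u\in \mathrm{Lie}(H)(\C)$ and $\mathrm{Lie}(H)$ contained in that hyperplane. In particular $H\neq G$. So it suffices to show that every connected algebraic subgroup $H$ with $u\in\mathrm{Lie}(H)$ equals $G$. The inequality $\le$ is trivial, since there are only $1+2\sum_j k_j$ numbers.

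The first step treats one factor at a time. Let $H_j$ be the projection of $H$ to $E_j$, and $\bar H_j$ its image in $T_j$.
\begin{itemize}
\item Because $H_1(T_j,\Z)$ injects into $\mathrm{Lie}(T_j)$, the $\omega$-part of $u_j(\gamma_j)$ is nonzero when $\gamma_j\neq 0$. Hence $\bar H_j$ is a nonzero abelian subvariety of $T_j$, and simplicity gives $\bar H_j=T_j$.
\item Since $H_j$ surjects onto $T_j$, the quotient $E_j/H_j$ is a vector group. By the universal property of $E_j$ we have $\mathrm{Hom}(E_j,\mathbb{G}_a)=0$, so $E_j/H_j=0$ and $H_j=E_j$.
\item The projection of $H$ to $\mathbb{G}_m$ is a connected subgroup whose Lie algebra contains $2\pi i\neq0$, so it is all of $\mathbb{G}_m$.
\end{itemize}

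The main obstacle is the Goursat-type step: showing that a connected subgroup $H$ surjecting onto every factor is the whole product. I would argue by induction on the number of factors. Suppose $H$ surjects onto $G'\times E_n$ factorwise and, by induction, $H$ projects onto all of $G'$. Then $H$ is the graph of an isomorphism between a quotient of $G'$ and a quotient of $E_n$ by the kernels.
\begin{itemize}
\item A nontrivial connected quotient of $E_n$ is either a quotient surjecting onto $T_n$, or a vector-group quotient. The latter is excluded because $\mathrm{Hom}(E_n,\mathbb{G}_a)=0$.
\item A quotient surjecting onto $T_n$ would have to be isogenous to a common subquotient of $G'$. Its abelian part must then be isogenous to some $T_i$ with $i<n$, contradicting pairwise non-isogeny. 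It cannot come from $\mathbb{G}_m$, since a torus admits no nonconstant map onto an abelian variety.
\end{itemize}
Hence the graph is trivial and $H=G$. I would make this rigorous through the Lie-algebra/quotient formalism, tracking carefully that both the unipotent parts and the abelian parts of the quotients must match.
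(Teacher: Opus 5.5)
The paper gives no proof of this statement. It quotes it as a black box from Wolfart--W\"ustholz \cite{wolfart1985uberlagerungsradius} and only applies it later, in Lemma~\ref{lemma10} and Theorem~\ref{thm:transcendental}. Your argument is the standard derivation of this corollary from W\"ustholz's Analytic Subgroup Theorem, and it is correct.

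You take $G=\mathbb{G}_m\times\prod_j E(T_j)$ and the logarithm $u=(2\pi i,u_1(\gamma_1),\dots,u_n(\gamma_n))$ of the identity. Simplicity of $T_j$ and $\mathrm{Hom}(E(T_j),\mathbb{G}_a)=0$ then force each projection of $H$ to equal $E(T_j)$. In the Goursat step you correctly use that any surjection from $\mathbb{G}_m\times\prod_{j<n}E(T_j)$ onto an abelian variety kills the torus and the vector parts, so it factors through $\prod_{j<n}T_j$. Pairwise non-isogeny therefore gives $\{0\}\times E(T_n)\subseteq H$, hence $H=G$. Compared with the bare citation, your route shows where each hypothesis enters: $\gamma_j\neq 0$, simplicity, non-isogeny, and the anti-affineness of the universal vector extension. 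The citation buys only brevity.

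One hypothesis you use silently should be stated, since the paper's formulation also leaves it implicit. The forms $\omega^{(j)}_k,\eta^{(j)}_k$ must be defined over $\overline{\Q}$, and together they must give a basis of $H^1_{\mathrm{dR}}(T_j)$. Your step ``a nontrivial $\overline{\Q}$-linear relation puts $u$ in a $\overline{\Q}$-rational hyperplane of $\mathrm{Lie}(G)$'' depends on the first of these. Without it the statement is false: over $\C$ one can choose a basis element with vanishing period along $\gamma_j$.
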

To apply this result, it is necessary to determine whether the differential forms appearing in Euler's integral representation  \eqref{eq1}, give rise to periods of the first or second kind. We investigate if they satisfy inequalities of the form \eqref{eq1} and \eqref{eq2}.  Throughout, for a given $(a,b,c)$, we let $M=\mbox{L.C.D}(b,c)$. $N=\mbox{L.C.D}(a,b,c)$, and define  
$$
     T_z=\text{Jac}_{\text{new}}(X(N,z))\quad \mbox{and} \quad  T_0=\text{Jac}_{\text{new}}(X(M,0)).
$$
The new Jacobians are defined in Section~\ref{sec:basic}; see~\eqref{eqJac1}. Then $T_z$ and $T_0$ are abelian varieties defined over $\overline{\Q}$.

% \begin{lemma}
% For all the $(a,b,c)$ as of Table~\ref{table:0} and Table~\ref{table:main}, $\int_0^1 x^{b-1} (1-x)^{c-b-1} (1-zx)^{-a} dx$ is a period of first kind on the new Jacobian variety of $X(N,z)$ defined in \eqref{var1}. 
% % {\cy [of what?]}
% \end{lemma}
% \begin{lemma}
% For all the $(a,b,c)$ as of Table~\ref{table:0} and Table~\ref{table:main}, if $c \neq 1$, $B(b,c-b)$ is a period on the new Jacobian variety of $X(M,0)$ defined in \eqref{var2}. In particular,
% $$
% B(b,c-b) = \begin{cases}
% \text{period of first kind :} &\text{if}\; c<1 \\[6pt]
% \text{period of second kind :} &\text{if}\; c>1 \\[6pt]
% \text{algebraic multiple of}\; \pi  &\text{if}\; c=1 
% \end{cases}
% $$
% \end{lemma}

\begin{lemma}\label{lemma:period kind}
For all the $(a,b,c)$ as of Table~\ref{table:0} and Table~\ref{table:main}, the integral $\int_0^1 x^{b-1} (1-x)^{c-b-1} (1-zx)^{-a} dx$ is a period of first kind on $T_z$. 
% {\cy [of what?]}
If $c \neq 1$, the beta integral $B(b,c-b)$ is a period on $T_0$. In particular,
$$
B(b,c-b) \mbox{ is } 
\begin{cases}
\text{a period of first kind,} &\text{if}\; c<1, \\[6pt]
\text{a period of second kind, } &\text{if}\; c>1, \\[6pt]
\text{an algebraic multiple of }\; \pi,  &\text{if}\; c=1.  
\end{cases}
$$
\end{lemma}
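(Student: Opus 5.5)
The plan is to verify the holomorphy inequalities \eqref{eq1} and \eqref{eq2} directly for each datum $(a,b,c)$ in Table~\ref{table:0} and Table~\ref{table:main}. This is a finite case check.

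\textbf{Numerator integral.} For the numerator, write the integrand $x^{b-1}(1-x)^{c-b-1}(1-zx)^{-a}\,dx$ in the form $\omega_n=x^{\alpha}(1-x)^{\beta}(1-zx)^{\gamma}dx/y^n$ on $C(N,z)$. Since $y^N=x^{A}(1-x)^{B}(1-zx)^{C}$ in the affine chart $x_1=1$, $x_2=x$, we have $y^{-n}=x^{-nA/N}(1-x)^{-nB/N}(1-zx)^{-nC/N}$. We need an $n$ with $nA/N\equiv 1-b$, $nB/N\equiv 1+b-c$ and $nC/N\equiv a$ modulo $\Z$. Since $A/N=1-b$, $B/N=1+b-c$ and $C/N=a$, the choice $n=1$ works, with
$$\alpha=b-1+(1-b)=0,\qquad \beta=c-b-1+(1+b-c)=0,\qquad \gamma=-a+a=0.$$
So $\omega=\omega_1=dx/y$, and it lies in the eigenspace $V_i$ with $i\equiv\pm1$, which is coprime to $N$. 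Hence, once holomorphic, $\omega$ is a differential of the first kind on the new part $T_z$.

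It remains to check \eqref{eq1} with $\alpha=\beta=\gamma=0$ and $n=1$. The first three inequalities read $0\ge \big((N,A)+A\big)/N-1$, i.e.\ $(N,A)+A\le N$. This holds automatically because $N\nmid A$ and $0<A<N$ in our range $0<a,b,c$, $b<c$: $A$ is a proper multiple of $(N,A)$ below $N$. The same argument handles $B$ and $C$. The fourth inequality becomes $(N,N-A-B-C)\le A+B+C-N$, i.e.\ $A+B+C>N$ with the appropriate divisibility margin, which is equivalent to $a+1-c>0$ up to the gcd correction. I will check it row by row from the table, computing $N$, $A$, $B$, $C$. For example, $(\tfrac14,\tfrac14,\tfrac12)$ gives $N=4$, $A=3$, $B=3$, $C=1$, and $A+B+C-N=3\ge(4,3)=1$. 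This check against the fourth inequality is the step I expect to be the main obstacle. If it fails for some datum with $n=1$, I will instead use a different representative $\omega_n$ with $(n,N)=1$ and shifted integer exponents, or apply a Kummer/Euler transformation $(a,b,c)\mapsto(c-a,c-b,c)$, or swap $a$ and $b$ in the Euler integral, so as to land inside the cone \eqref{eq1}.

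\textbf{Beta integral.} For $B(b,c-b)$, proceed in the same way on $C(M,0)$ with $P/M=1-b$ and $Q/M=1+b-c$. Again $n=1$ and $\alpha=\beta=0$. The first two inequalities of \eqref{eq2} hold as before, and the last reads
$$(M,M-P-Q)\le P+Q-M=M(1-c).$$
\begin{itemize}
\item If $c<1$, this holds when $M\nmid(M-P-Q)$, which is the case since $c\notin\Z$. So the form is holomorphic and $B(b,c-b)$ is a period of the first kind on $T_0$.
\item If $c>1$, the last inequality fails. The form $dx/y$ then has poles only at the point over $x=\infty$, and a local computation shows zero residue there, since the residue of a $\mu_M$-eigenform is forced to vanish for a nontrivial character. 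So it is a differential of the second kind, and $B(b,c-b)$ is a period of the second kind.
\item If $c=1$, then $B(b,1-b)=\Gamma(b)\Gamma(1-b)=\pi/\sin(\pi b)$, which is an algebraic multiple of $\pi$ because $b\in\Q$.
\end{itemize}
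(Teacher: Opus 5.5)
Your proposal is correct and follows the paper's own route: check \eqref{eq1} and \eqref{eq2} for the form $dx/y$, with $n=1$ and all exponents zero, and use the failure of \eqref{eq2} when $c>1$, as the paper does in the proof of Theorem~\ref{thm:transcendental}; your residue argument for $c>1$ and the reflection formula for $c=1$ supply details the paper leaves unstated. The step you flagged as the main obstacle is in fact immediate: $(N,N-A-B-C)$ divides $A+B+C-N=N(1+a-c)$, and likewise $(M,M-P-Q)$ divides $P+Q-M=M(1-c)$, so the last inequality is exactly $c<1+a$ (resp.\ $c<1$), which holds for every row of both tables, and no fallback transformation is needed.
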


The transcendence result implies that the algebraicity of the hypergeometric value imposes constraints on the associated period data. In particular, it forces the corresponding values of the Schwarz triangle function   $D_{abc}(z)$ to lie in an imaginary quadratic field. The following lemma determines the CM field based on $a,b,c$.
\begin{lemma} \label{lemma10}
Let $(a,b,c)$ be any datum in  Table~\ref{table:main}. If $z \in E(a,b,c)$, any Schwarz map $D_{abc}(z)$  takes value in $\Q(\sqrt{-d})$. 
\end{lemma}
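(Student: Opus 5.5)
The plan is to write the Schwarz map as a quotient of two periods and then use W\"ustholz's Theorem~\ref{theoremwustholz} to force that quotient into the CM field of $T_z$. Take the two local solutions $f,g$ of $\text{HDE}(a,b,c;z)$ as Euler-type integrals of the same form $\omega=x^{b-1}(1-x)^{c-b-1}(1-zx)^{-a}dx$ over the Pochhammer cycles $\gamma_{01}$ and $\gamma_{\frac1z\infty}$ of Section~\ref{sec:basic}. Any Schwarz map differs from $f/g$ by a M\"obius transformation with coefficients in $\Q(\zeta_N)$ (indeed in the CM field, once normalized by the triangle vertices of Table~\ref{table:2}). So it suffices to show that
$$
\frac{\int_{\gamma_{01}}\omega}{\int_{\gamma_{\frac1z\infty}}\omega}\in\Q(\sqrt{-d}).
$$
By Lemma~\ref{lemma:period kind}, both integrals are periods of the first kind on $T_z$.

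Next, fix $z\in E(a,b,c)$. Then ${}_2F_1(a,b,c;z)=\beta$ is algebraic, and Euler's formula \eqref{euler} gives $\int_{\gamma_{01}}\omega = \beta'\,B(b,c-b)$ with $\beta'\in\overline{\Q}^\times$; the Pochhammer normalization only contributes an algebraic factor $(1-e^{2\pi i b})(1-e^{2\pi i(c-b)})$. When $c<1$, Lemma~\ref{lemma:period kind} makes $B(b,c-b)$ a period of the first kind on $T_0$. Decompose $T_z$ and $T_0$ up to isogeny into simple factors and apply Theorem~\ref{theoremwustholz} to the collection of pairwise nonisogenous simple factors. The resulting $\overline{\Q}$-linear relation between a period of $T_z$ and a period of $T_0$ must then come from an isogeny. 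Hence the $\omega$-isotypic part of $T_z$ (the $V_1$-eigencomponent) is isogenous to a factor of $T_0$. This factor has CM by the cyclotomic field coming from the $\mu_M$-action, which for the data in Table~\ref{table:main} is $\Q(i)$ or $\Q(\zeta_3)$, matching $d=1,3$ (and $d=2$ for the Fricke cases $\Gamma_0^+(2)$). In the case $c>1$, $B(b,c-b)$ is of the second kind; Theorem~\ref{theoremwustholz} then shows that $\int_{\gamma_{01}}\omega$ cannot be algebraically related to it, which is the source of the $E=\{0\}$ statement.

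Once the relevant simple factor $A$ of $T_z$ is known to have CM with endomorphisms defined over $\overline{\Q}$, I would show that every pair of periods of a single invariant differential on $A$ have ratio in the CM field. The $\gamma$-periods span a rank-one $\mathrm{End}(A)\otimes\Q$-module, and this module is $1$-dimensional over $\Q(\sqrt{-d})$ when $\dim A=1$, which is the relevant case since $\phi(N)$-dimensional new parts split into elliptic curves for these $N$. Therefore the two Pochhammer periods of $\omega$ differ by a factor in $\Q(\sqrt{-d})$, and $D_{abc}(z)\in\Q(\sqrt{-d})$.

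The main obstacle is identifying the CM field as exactly $\Q(\sqrt{-d})$ with the value of $d$ in Table~\ref{table:main}, rather than merely some CM field. My first attempt would be to check, case by case using Archinard's description, that $V_1$ on $T_z$ is $1$-dimensional and that the $\mu_N$-action restricts to the elliptic factor through $\Q(\zeta_4)$ or $\Q(\zeta_3)$. For the $\Gamma_0^+(2)$ and $\Gamma_0^+(3)$ rows, where the vertex $\tau_1=i/\sqrt2$ or $i/\sqrt3$ suggests the field $\Q(\sqrt{-2})$, I would instead pass through the degree-$2$ covers $(d,\infty,\infty)\to(2,d,\infty)$ and the quadratic transformations relating $J_{2N}$ and $S_N$, transferring the conclusion from $(2,\infty,\infty)$ and $(3,\infty,\infty)$ while tracking the M\"obius change of Schwarz map.
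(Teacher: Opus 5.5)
Your outline is the same as the paper's: Wüstholz forces a simple factor of $T_z$ to be isogenous to a factor of $T_0$, and $D_{abc}(z)$ is then read off as a period ratio in a CM lattice. It works when $M=\mathrm{LCD}(b,c)\in\{3,4,6\}$. The gap is in the step that fixes $d$ for the $(2,4,\infty)$ rows.

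\textbf{The missing idea for $M=8$.} For these rows $M=8$, so the $\mu_M$-action only gives the quartic field $\mathbb{Q}(\zeta_8)$, and $T_0$ is an abelian surface. Your claim that the relevant factor has CM by ``$\mathbb{Q}(i)$ or $\mathbb{Q}(\zeta_3)$'' is therefore unjustified. Your parenthetical ``$d=2$ for the Fricke cases'' has no mechanism behind it, and it is wrong as stated, since half of the $\Gamma_0^+(2)$ rows have $d=1$. What is needed is to compute the CM type of the Beta period and reduce it.
\begin{itemize}
\item For $B(\tfrac18,\tfrac38)$, coming from $(\tfrac18,\tfrac18,\tfrac12)$, the holomorphic eigenspaces are $V_1$ and $V_3$, so $\Phi=\{1,3\}\subset(\mathbb{Z}/8\mathbb{Z})^\times$. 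This type is stable under $H=\{1,3\}$, hence imprimitive. Milne's lemma then gives $T_0\sim B^2$ with $B$ having CM by $\mathbb{Q}(\zeta_8)^H=\mathbb{Q}(\sqrt{-2})$, which is where $d=2$ comes from.
\item For $B(\tfrac18,\tfrac58)$, coming from $(\tfrac18,\tfrac18,\tfrac34)$, the type is $\{1,5\}$ and the field is $\mathbb{Q}(i)$. The paper obtains this from $B(\tfrac18,\tfrac58)=2^{3/4}B(\tfrac12,\tfrac14)$.
\end{itemize}

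\textbf{Your fallback gives the wrong field.} The quadratic transformation only yields ${}_2F_1(\tfrac14,\tfrac14,\tfrac12;S_2)=(1-S_2)^{-1/8}\,{}_2F_1(\tfrac18,\tfrac18,\tfrac34;J_4)$. So it transfers $d=1$ to the $c=\tfrac34$ row only.

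For $(\tfrac18,\tfrac18,\tfrac12)$ with $t=1-J_4$, the Schwarz map is the same $\tau$, but the normalized solution is different. By Table~\ref{table:identity}, with $\tau_1=i/\sqrt2$ the zero of $1-J_4$, it equals $\frac{\tau+\tau_1}{2\tau_1}\cdot\frac{\eta(\tau)\eta(2\tau)}{\eta(\tau_1)\eta(2\tau_1)}$. For $\tau\in\mathbb{Q}(i)\cap\mathbb{H}$ this lies in $\overline{\mathbb{Q}}^{\times}\,\eta(i)^2/\eta(\sqrt2\, i)^2$. That is an algebraic multiple of a ratio of periods of non-isogenous CM elliptic curves, hence transcendental.

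So ``transferring the conclusion from $(2,\infty,\infty)$'' would produce $\mathbb{Q}(i)$ where the correct field is $\mathbb{Q}(\sqrt{-2})$. Tracking the M\"obius change of the Schwarz map cannot detect this. The difference lies in which solution is normalized, which is a transcendental connection constant, not in the Schwarz map. Also, $i/\sqrt3$ points to $\mathbb{Q}(\sqrt{-3})$, not $\mathbb{Q}(\sqrt{-2})$; all $(2,6,\infty)$ rows have $d=3$.

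\textbf{A lesser point.} You assert that the normalized Schwarz map $t^{-1}$ is a M\"obius transform of the Pochhammer period ratio with coefficients in the CM field. This needs an argument, because the constants involved are individually transcendental. Examples are $B(a-c+1,1-a)/B(b,c-b)$ and the leading Taylor coefficient of $t$ at $\tau_0$. The paper is equally terse at the corresponding step, but it should not be treated as automatic.
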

\begin{proof}
Let $\omega = x^{b-1} (1-x)^{c-b-1} (1-zx)^{-a} dx$. In this case, by Lemma \ref{lemma:period kind}, the period $\int_0^1\omega$ is of first kind on $T_z$ and $B(b,c-b)$ is of first kind on $T_0$.
%    Let $M,N$ be the lowest common denominator of $(b,c)$ and $(a,b,c)$, respectively. The $_2F_1$ can be written in terms of quotient of periods as
% $$
% \hyp(a,b,c,z) = \dfrac{1}{B(b,c-b)} \int_0^1 x^{b-1} (1-x)^{c-b-1} (1-zx)^{-a} dx  
% $$
% where $\int_0^1\omega=\int_0^1 x^{b-1} (1-x)^{c-b-1} (1-zx)^{-a} dx$ is a period of first kind on $T_z=\text{Jac}_{\text{new}}(X(N,z))$ and $B(b,c-b)$ is a period of first kind on $T_0=\text{Jac}_{\text{new}}(X(M,0))$. 
% $T_z$ and $T_0$ are abelian varieties defined over $\overline{\Q}$. 
Since $_2F_1(a,b,c;z)=\frac{\int_0^1\omega}{B(b,c-b)} \in \overline{\Q}$, W\"ustholz's Theorem implies that there are simple abelian isogenous subvarieties $T^{\prime}_z$ and $T^{\prime}_0$ of $T_z$ and $T_0$ respectively. 
\newline For all $(a,b,c)$ in Table~\ref{table:main}, $M \in \{3,4,6,8\}$. If $M \neq 8$, the dimension of $T_0$ is $\frac{\phi(M)}{2} =1$. So, we have $T^{\prime}_z \sim T^{\prime}_0 \sim T_0$. Due to the action of $\mu_M$ on $T_0$, the endomorphism ring of $T_0$ contains $Z[\zeta_M]$. 
Hence, $T_0 \sim \C/\Lambda(0)$ where $\Lambda(0) = \Z \oplus \tau_M\Z$ with $\tau_M \in \Q(\zeta_M)$. For $M \in \{3,4,6\}$, the corresponding integers $d$ are given in the following table. 
\\
\begin{center}
\renewcommand{\arraystretch}{1.5}
\begin{tabular}{|w{c}{2cm}|w{c}{2cm}|w{c}{2cm}|w{c}{2cm}|}
\hline
$M$ & $\Q(\zeta_M)$ & $d$ & $\Q(\sqrt{-d})$ \\
\hline
$3$ & $\Q(\zeta_3)$ & $3$ & $\Q(\sqrt{3}i)$ \\
\hline
$4$ & $\Q(\zeta_4)$ & $1$ & $\Q(i)$ \\
\hline
$6$ & $\Q(\zeta_6)$ & $3$ & $\Q(\sqrt{3}i)$ \\
\hline
\end{tabular}
\end{center}
\vspace{0.6cm}
By definition, $T_z \sim \C^{\phi(N)}/\Lambda(z)$. We can express $\Lambda(z)$ in terms of Schwarz function as
$$
\Lambda(z)\coloneqq
\left\{\Bigl(
  \sigma_i(u)\,
  \;+\;
  \sigma_i(v)\, D_{abc}(z)
\Bigr) : (i,N) = 1, \omega_i \;\text{is basis element of}\;V_i \; ; \; u,v\in \Z[\zeta_N] \right\}
$$
up to a permutation of $u$ and $v$ in each of the summand. Moreover $z \in E(a,b,c)$ implies that both $z$ and $_2F_1(a,b,c;z)$ are algebraic. Since, $T_z$ and $T_0$ are defined over $\overline{\Q}$, there is a $\overline{\Q}$-linear transformation $\lambda : \C \rightarrow \C^{\phi(N)}$ with the property $\lambda(\Lambda(0)) \subseteq \Lambda(z)$. Due to the linearity of $\lambda$ applied to $\tau_M$, we have $D_{abc}(z) \in \Q(\zeta_M) = \Q(\sqrt{-d})$. 

For the $(a,b,c)$ corresponding to the triangle group $(2,4,\infty)$, $T_0$ is a variety of dimension $2$, since $M=8$. In the case of $B(\frac{1}{8},\frac{5}{8})$ corresponding to $(a,b,c) = (\frac{1}{8}, \frac{1}{8}, \frac{3}{4})$, we apply multiplication and reflection formula to obtain
$$B(\frac{1}{8},\frac{5}{8}) \in \overline{\Q} \cdot B(\frac{1}{2},\frac{1}{4}).$$
Since $B(\frac{1}{2},\frac{5}{4})$ is a period of the first kind on a variety of dimension $1$, W\"ustholz's Theorem (\ref{theoremwustholz}) applies here, and we get $D_{abc}(z) \in \Q(\sqrt{-1})$. 
\par For $B(\tfrac18,\tfrac38)$ corresponding to $(a,b,c) = (\tfrac18,\tfrac18,\tfrac12)$, we determine
the isogeny type of $T_0$ precisely. For that, we need the following lemma.
\begin{lemma}[Milne]~\cite[Ch.~I, \S3, Prop.~3.13]{milne2020cm} \label{lemma:milne-imprimitive}
Let $K$ be a CM field, Galois over $\Q$, with $\mathrm{Gal}(K/\Q)$ identified
with a subgroup of $(\Z/M\Z)^\times$ via $K=\Q(\zeta_M)$. Let $\Phi \subset
\mathrm{Gal}(K/\Q)$ be a CM-type, and let $H = \{\sigma \in \mathrm{Gal}(K/\Q) :
\sigma\Phi = \Phi\}$. If $H \neq \{1\}$, then $\Phi$ is imprimitive, extended
from the fixed field $K_0 = K^H$, and any abelian variety $A$ of CM-type
$(K,\Phi)$ is isogenous to $B^{[K:K_0]}$ for some abelian variety $B$ with CM
by $K_0$.
\end{lemma}
For $B(\tfrac18,\tfrac38)$ corresponding to $(a,b,c) = (\tfrac18,\tfrac18,\tfrac12)$, we determine
the isogeny type of $T_0$ precisely. Here $M=8$, $P=7$, $Q=5$, and
$C(8,0): y^8 = x^7(1-x)^5$. Applying~\eqref{eq2} to $n=1,3,5,7$ gives
$\dim V_1 = \dim V_3 = 1$ and $\dim V_5 = \dim V_7 = 0$, so $T_0$ has CM-type
$(\Q(\zeta_8), \Phi)$ with $\Phi = \{1,3\} \subset (\Z/8\Z)^\times$. Since $\Phi$
coincides with the order-$2$ subgroup $H=\{1,3\}$, Lemma~\ref{lemma:milne-imprimitive}
applies.
\par Define $\sigma_3$ on $\Q(\zeta_8)$ by $\zeta_8 \mapsto \zeta_8^3$. Since
$\sigma_3(i)=-i$ and $\sigma_3(\sqrt2)=-\sqrt2$, it fixes $\sqrt{-2}$ in
$\Q(\zeta_8)=\Q(i,\sqrt2)$, and since $[\Q(\zeta_8):\Q(\sqrt{-2})]=2=|H|$, we get
$K_0=\Q(\sqrt{-2})$. By Lemma~\ref{lemma:milne-imprimitive}, $T_0$ is isogenous
to $B^2$ for some elliptic curve $B$ with CM by $K_0$. Any two elliptic curves
with CM by the same imaginary quadratic field are isogenous, so $B$ is
isogenous to $T^0 = \C/(\Z\oplus\sqrt2\,i\,\Z)$, which has CM by
$\Z[\sqrt{-2}]\subset K_0$. So $T_0 \sim T^0\times T^0$ is not simple, and the
simple $T_0'$ from Theorem~\ref{theoremwustholz} must be isogenous to $T^0$. We
conclude $D_{abc}(z) \in \Q(\sqrt{-2})$.
% \newline For $B(\frac{1}{8},\frac{3}{8})$ corresponding to $(a,b,c) = (\frac{1}{8}, \frac{1}{8}, \frac{1}{2})$ there is no such reduction of beta values into smaller denominators. But, from the classification of abelian surfaces, $T_0$ is isogenous to the direct sum of two isomorphic copies of $1$ dimensional simple abelian varieties: $T_0 \sim T^0 + T^0$. In particular, $T^0 = \C/\Lambda(0)$ with $\Lambda(0) = \Z \oplus\sqrt{2}i\Z$.
% {\cy [How about $\tau=\sqrt{2}i$ or $\Lambda(0) = \Z \oplus\sqrt{2}i\Z$?]} 
In this case, $T^{\prime}_0$ can either be isogenous to $T_0$ or $T^0$, since these are the only possible subvarieties of $T_0$.
% {\cy [can not be isogenous to $T_0$ nor $T^0$]} 
We again apply W\"ustholz's Theorem in this case to obtain that $D_{abc}(z) \in \Q(\sqrt{-2})$. For $M=8$, the corresponding integers are given in the following table.
\begin{center}
\renewcommand{\arraystretch}{1.5}
\begin{tabular}{|w{c}{2cm}|w{c}{2cm}|w{c}{2cm}|}
\hline
$(a,b,c)$ & $d$ & $\Q(\sqrt{-d})$ \\
\hline
$(\frac{1}{8}, \frac{1}{8}, \frac{3}{4})$ & $1$ & $\Q(i)$ \\
\hline
$(\frac{1}{8}, \frac{1}{8}, \frac{1}{2})$ & $2$ & $\Q(\sqrt{2}i)$ \\
\hline
\end{tabular}
\end{center}
This completes the proof. 
\end{proof}
To prove Theorem~\ref{tmain}, it remains prove the converse of Theorem~\ref{tforward}.
\begin{theorem}
If $z \in E(a,b,c)$, then $z = t(\tau_d)$ for $t$ and $\tau_d$ as in the Table~\ref{table:main}.
\end{theorem}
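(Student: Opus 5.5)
The plan is to turn Lemma~\ref{lemma10} into a statement about $\tau$ by inverting the Schwarz map. Fix a datum $(a,b,c)$ from Table~\ref{table:main} with its Hauptmodul $t$ and integer $d$, and take $z\in E(a,b,c)$. The case $z=0$ is immediate, since $0=t(\tau_0)$ and $\tau_0\in\Q(\sqrt{-d})\cap\UH$ by Table~\ref{table:identity}. So assume $z\neq 0$. Since $0,1,\infty$ are not algebraic values at which $_2F_1$ is defined through $\UH$ points other than vertices, we may also assume $z\notin\{1\}$. The cusp $\infty$ is excluded automatically.

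\textbf{Step 1: choose the right Schwarz map.} Choose the two independent solutions $f,g$ defining $D_{abc}$ to be exactly the normalized pair of Theorem~\ref{thm:hgmodid}. On the chosen Schwarz triangle,
$$
g(t(\tau))=(\tau+\alpha\tau_0)\,\mathcal W(\tau)^{1/2},\qquad f(t(\tau))=\tau\,(\tau+\alpha\tau_0)\mathcal W(\tau)^{1/2}\big/(\tau+\alpha\tau_0),
$$
up to constants. More cleanly, the solution space pulled back by $t$ is $\{(c_1\tau+c_2)\mathcal W(\tau)^{1/2}\}$, so the ratio of the solutions $\tau\,\mathcal W^{1/2}$ and $\mathcal W^{1/2}$ is exactly $\tau$. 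Hence, for this choice, $D_{abc}$ is the inverse of $t$ on the triangle, that is, $D_{abc}(t(\tau))=\tau$.

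\textbf{Step 2: compare with other choices.} Any other Schwarz map differs from this one by a fractional linear transformation. I must check that this transformation has coefficients in $\Q(\sqrt{-d})$, or at least that the Wüstholz argument in Lemma~\ref{lemma10} applies to this specific basis. The lemma asserts that this holds for any Schwarz map. The natural basis for applying it is the pair of period integrals $\int_{\gamma_{01}}\omega$ and $\int_{\gamma_{\frac1z\infty}}\omega$, so that is the basis I will work with.

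\textbf{Step 3: the main obstacle, relating the two bases.} I need the period ratio $\int_{\gamma_{\frac1z\infty}}\omega\big/\int_{\gamma_{01}}\omega$ and $\tau$ to be related by a Möbius transformation with coefficients in $\Q(\sqrt{-d})$. Both are ratios of solutions of HDE$(a,b,c)$, so they are related by some matrix. I would pin this matrix down by evaluating at the vertices. At $z\to 0$ the period ratio tends to a ratio of beta values, which by Lemma~\ref{lemma10} for $T_0$ (reflection and multiplication formulas, as in the proof for $M=8$) is in $\Q(\sqrt{-d})$, while $\tau\to\tau_0\in\Q(\sqrt{-d})$. At the remaining vertex and the cusp a similar comparison applies. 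Three point correspondences in $\Q(\sqrt{-d})\cup\{\infty\}$ determine the Möbius map over $\Q(\sqrt{-d})$. Alternatively, the monodromy, realized as $\Gamma$ acting on $\tau$ and as $\Z[\zeta_N]$-matrices on the periods, intertwines the two descriptions, and the arithmeticity of $\Gamma$ forces the field of definition. I expect this step to be the hardest, especially for the $M=8$ cases.

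\textbf{Step 4: conclude.} Granting Step 3, Lemma~\ref{lemma10} gives $\tau_d:=D_{abc}(z)\in\Q(\sqrt{-d})\cap\UH$, and therefore $z=t(\tau_d)$. If the chosen analytic continuation leaves the fundamental triangle, replace $\tau_d$ by $\gamma\tau_d$ for some $\gamma\in\Gamma$. Since $\Gamma$ has entries in $\Q(\sqrt{d})$ up to scaling, such as $W_N$, or in $\Q$, the point $\gamma\tau_d$ still lies in $\Q(\sqrt{-d})$ whenever $d$ is compatible. For $d=2$ with $\Gamma_0^+(2)$ this should be checked directly, since $\sqrt2\cdot\Q(\sqrt{-2})=\Q(i)$-type issues could arise, and this check may require a case-by-case look.
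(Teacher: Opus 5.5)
Your Steps 1 and 4 are exactly the paper's proof. The paper takes $D_{abc}$ to be the inverse of $t$ (your Step 1 normalization), reads off $\tau_d=D_{abc}(z)\in\Q(\sqrt{-d})$ directly from Lemma~\ref{lemma10}, which is stated for \emph{any} Schwarz map, and concludes $z=t(\tau_d)$; nothing like your Steps 2--3 appears. Your distrust of that formulation is justified. Replacing $f$ by $f+\mu g$ shifts $D_{abc}$ by an arbitrary $\mu\in\C$, so the lemma cannot hold for every Schwarz map. Its proof only concerns the ratio $\rho(z)$ of the periods over $\gamma_{\frac1z\infty}$ and $\gamma_{01}$, and the paper never verifies that $\rho$ is a $\Q(\sqrt{-d})$-rational M\"obius transform of $t^{-1}$. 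So your proposal, which explicitly grants Step 3, has a genuine gap exactly where the paper is silent, and neither of your sketches closes it.

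In the vertex matching, the substitution $x=s/z$ gives $\int_{\gamma_{\frac1z\infty}}\omega=z^{1-c}h(z)$ with $h$ holomorphic at $0$. So $\rho(z)\to0$ as $z\to0$, and the beta values you invoke play no role there; the nontrivial connection constants sit at the other two vertices, and nothing you cite places them in $\Q(\sqrt{-d})$. Worse, for $(\tfrac18,\tfrac18,\tfrac12)$ with $t=1-J_4$ the $\tau$-vertices are $\tfrac{i}{\sqrt2}$, $\infty$ and $t^{-1}(1)=\tfrac{1+i}{2}\notin\Q(\sqrt{-2})$. Three correspondences inside $\Q(\sqrt{-d})\cup\{\infty\}$ are therefore not available, and two leave a one-parameter family of M\"obius maps. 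Your monodromy alternative, carried out, only puts the intertwining map in $\mathrm{PGL}_2(\Q(\zeta_N))$: $\Gamma$ acts by rational M\"obius maps and is Zariski dense, while the monodromy of $\rho$ is defined over $\Q(\zeta_N)$, so the map is fixed by $\mathrm{Aut}(\C/\Q(\zeta_N))$. This suffices for $N\in\{3,4,6\}$, but for $N=8$ it gives only $\tau\in\Q(\zeta_8)$ and leaves $\Q(i)$ versus $\Q(\sqrt{-2})$ undecided.

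You can avoid comparing normalizations altogether by arguing on the modular side. Continue \eqref{eq:hgmodid} over the fundamental domain, which is the branch convention fixed before Theorem~\ref{thm:hgmodid}. If $z=t(\tau)\in\overline{\Q}$, then $j(\tau)\in\overline{\Q}$, so $E_\tau$ has a model over $\overline{\Q}$ with period lattice $\Z\omega_1+\Z\tau\omega_1$. Each eta product in Table~\ref{table:identity} is then $\eta(\tau)^2$ times an algebraic number, hence $\mathcal W(\tau)^{1/2}\in\overline{\Q}\,\omega_1/\pi$. Likewise $\mathcal W(\tau_0)^{1/2}\in\overline{\Q}\,\Omega_0/\pi$ for a period $\Omega_0$ of the CM curve $E_{\tau_0}$. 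Algebraicity of ${}_2F_1(a,b,c;z)$ then says $\tau\omega_1+\alpha\tau_0\omega_1\in\overline{\Q}\,\Omega_0$. W\"ustholz's theorem for $E_\tau\times E_{\tau_0}$ forces $E_\tau\sim E_{\tau_0}$, i.e.\ $\tau\in\Q(\tau_0)=\Q(\sqrt{-d})$. The remaining rows of Table~\ref{table:main} follow by the Euler and Pfaff transformations.

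Two smaller points. First, $z=1$ cannot be discarded: for instance ${}_2F_1(\tfrac16,\tfrac16,\tfrac23;1)=\Gamma(\tfrac23)\Gamma(\tfrac13)/\Gamma(\tfrac12)^2=2/\sqrt3$, so $z=1$ has to be matched with $\tau_1=t^{-1}(1)=\zeta_3$. Second, your $d=2$ worry in Step 4 is unfounded. $W_N$, $\mathscr R$ and $\mathscr S$ are scalar multiples of integer matrices, so every class I group acts by M\"obius maps with rational coefficients and preserves $\Q(\sqrt{-d})\cap\UH$.
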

\begin{proof}
In Table~\ref{table:main}, for each $(a,b,c)$, the Hauptmodul $t$ has been chosen in such a way that $t$ is the inverse of the Schwarz triangle function $D_{abc}$. For $z \in E(a,b,c)$, due to Lemma~\ref{lemma10}, let $D_{abc}(z) = \tau_d \in \Q(\sqrt{-d})$. So, $z = t (\tau_d)$. 
This completes the proof of Theorem~\ref{tmain}.
\end{proof} 
\begin{example}\label{appendixexample} 
%Consider the datum
%$$
%(a,b,c)=\left(\frac14,\frac14,\frac12\right),
%$$
We take a particular example for the triangle group $(2,\infty,\infty)$ that 
$$
9\in E\!\left(\frac14,\frac14,\frac12\right),
$$
for which the corresponding Hauptmodul is $S_2$. Take the CM point $\tau=i$. A direct computation gives that $S_2(i)=9$ and
$$
{}_2F_1\!\left(\frac14,\frac14,\frac12;S_2(i)\right)
=
{}_2F_1\!\left(\frac14,\frac14,\frac12;9\right)
=
\frac{2-i}{2\sqrt2}.
$$
%So we have
%$$
%9\in E\!\left(\frac14,\frac14,\frac12\right).
%$$
We now illustrate the procedure of finding these special values. More such values are listed in Table~\ref{table:explicit_examples}.
% The Hauptmodul \(S_2(\tau)\) is related to the modular function \(J_4(\tau)\) by
% \[
% J_4(\tau)=\frac{S_2(\tau)^2}{4(S_2(\tau)-1)}.
% \]
% So we first compute \(J_4(i)\). Recall that
% \[
% J_4(\tau)=\frac1{256}\left(A_2(\tau)+\frac{64}{A_2(\tau)}\right)^2,
% \qquad
% A_2(\tau)=\left(\frac{\eta(\tau)}{\eta(2\tau)}\right)^{12}.
% \]
% Using the eta transformation formula
% \[
% \eta\!\left(-\frac1\tau\right)=(-i\tau)^{1/2}\eta(\tau),\;\;\;
% \text{at}\;\;\; \tau=2i\;\;\;\text{gives}\;\;\;
% \eta\!\left(\frac{i}{2}\right)=\sqrt2\,\eta(2i).
% \]
% We evaluate the eta quotient from the eta-product formula for the modular lambda function,
% \[
% \lambda(\tau)
% =
% 16\,\frac{\eta(\tau/2)^8\eta(2\tau)^{16}}{\eta(\tau)^{24}},\;\;\;
% \lambda(i)
% =
% 16\,\frac{\eta(i/2)^8\eta(2i)^{16}}{\eta(i)^{24}}
% =
% 16\cdot 2^4
% \left(\frac{\eta(2i)}{\eta(i)}\right)^{24}
% \]
% since \(i\) is fixed by \(\tau\mapsto -1/\tau\), and
% \(\lambda(-1/\tau)=1-\lambda(\tau)\), gives $\lambda(i)=\frac12$.
% Thus
% \[
% A_2(i)=\left(\frac{\eta(i)}{\eta(2i)}\right)^{12}
% =
% 16\sqrt2,\;\;\;
% J_4(i)
% =
% \frac1{256}
% \left(
% 16\sqrt2+\frac{64}{16\sqrt2}
% \right)^2
% =
% \frac{81}{32},\;\;
% S_2(i)=9.
% \]
% [Skip all the $J_4$ and $A_2$ discussions.]
Using the eta transformation formula at $\tau=2i$ gives
\[
\eta\!\left(\frac{i}{2}\right)=\sqrt2\,\eta(2i). 
\]
We evaluate the eta quotient from the eta-product formula for the modular lambda function,
\[
\lambda(\tau)
=
16\,\frac{\eta(\tau/2)^8\eta(2\tau)^{16}}{\eta(\tau)^{24}},\;\;\;
\lambda(i)
=
16\,\frac{\eta(i/2)^8\eta(2i)^{16}}{\eta(i)^{24}}
=
16\cdot 2^4
\left(\frac{\eta(2i)}{\eta(i)}\right)^{24}= -4u(i),
\]
where $u(\tau)=-64\eta(2\tau)^{24}/\eta(\tau)^{24}$ in Table \ref{table:2}. Since \(i\) is fixed by \(\tau\mapsto -1/\tau\), and
\(\lambda(-1/\tau)=1-\lambda(\tau)\), we have $\lambda(i)=\frac12$ and $u(i)=-1/8$.
Thus
\[
S_2(i)=\frac{u(i)-1}{u(i)}=9.
\]
Using the eta transformation formulas, we find
\[
\frac{\eta(2i)^4}{\eta(i)^2}
=
\frac{\eta(i)^2}{2\sqrt2},
\qquad
\frac{\eta(1+i)^4}
{\eta\!\left(\frac{1+i}{2}\right)^2}
=
\frac{1+i}{2}\eta(i)^2.
\]
The first identity follows from the above eta-product formula for \(\lambda\). For the second identity, we use the transformation \(\tau\mapsto -1/\tau\) for \( \tau = (1+i)/2\). Thus \(\eta((1+i)/2)\) is reduced to \(\eta(i-1)=e^{-\pi i/12}\eta(i)\). Substituting these into the hypergeometric--modular identity for $\tau_0=\frac{1+i}{2}$ yields
\[
{}_2F_1\!\left(\frac14,\frac14,\frac12;9\right)
=
\frac{2\tau+i-1}{2\tau_0+i-1}
\frac{\eta(2\tau)^4/\eta(\tau)^2}
{\eta(2\tau_0)^4/\eta(\tau_0)^2} = \frac{3i-1}{2i}\cdot \frac{1}{2\sqrt2(1+i)} =
\frac{2-i}{2\sqrt2}.
\]

\end{example}
This completes the determination of the exceptional set for all data in Takeuchi’s class I for which non-trivial algebraic values occur. But there are cases for which the exceptional set can be trivial. Table~\ref {table:0} records the data for which the exceptional set is trivial. 

\begin{theorem} \label{thm:transcendental}
For all $(a,b,c)$ in Table~\ref{table:0}, $E(a,b,c)=\{0\}$.
% {\cy[do you mean $E(a,b,c)=\{0\}$?]}
\end{theorem}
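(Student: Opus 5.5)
The plan is to use the main theorem's remark that $c\ge 1$ forces a trivial exceptional set, and to treat Table~\ref{table:0} as the data in Class I with $c\ge1$. I would split into two cases: $c>1$ and $c=1$. In both cases the inclusion $0\in E(a,b,c)$ is trivial, since ${}_2F_1(a,b,c;0)=1$. So the work is to show that for algebraic $z\neq 0$ the value ${}_2F_1(a,b,c;z)$ is transcendental.

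\emph{Setup.} Write
$$
{}_2F_1(a,b,c;z)=\frac{\int_0^1\omega}{B(b,c-b)}, \qquad \omega=x^{b-1}(1-x)^{c-b-1}(1-zx)^{-a}\,dx.
$$
By Lemma~\ref{lemma:period kind}, $\int_0^1\omega$ is a nonzero period of the first kind on $T_z$, which is defined over $\overline{\Q}$ because $z\in\overline{\Q}$. Suppose ${}_2F_1(a,b,c;z)=\beta\in\overline{\Q}^{\times}$; then
$$
\int_0^1\omega-\beta\, B(b,c-b)=0 .
$$
Decompose $T_z$ and $T_0$ up to isogeny into simple factors. Lift the cycle $[0,1]$ (via the Pochhammer loop $\gamma_{01}$, which differs from $[0,1]$ by an algebraic nonzero factor) to a nonzero class in $H_1$ of the relevant simple factor $T_z'$.

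\emph{Case $c=1$.} Here $B(b,1-b)=\pi/\sin(\pi b)$ is an algebraic multiple of $\pi$, hence of $2\pi i$. The relation then reads
$$
\int_{\gamma}\omega' \in \overline{\Q}\cdot 2\pi i
$$
for a nonzero first-kind period $\int_{\gamma}\omega'$ on a simple factor of $T_z$. This contradicts Theorem~\ref{theoremwustholz} with $n=1$, which asserts that $2\pi i$ and the first-kind periods are $\overline{\Q}$-linearly independent.

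\emph{Case $c>1$.} By Lemma~\ref{lemma:period kind}, $B(b,c-b)$ is a period of the second kind on $T_0$. There are two subcases.
\begin{itemize}
\item If the simple factor of $T_z$ carrying $\int_0^1\omega$ is not isogenous to the factor of $T_0$ carrying $B(b,c-b)$, Theorem~\ref{theoremwustholz} applied to the pair of pairwise nonisogenous varieties gives independence immediately.
\item If they are isogenous, transport everything to one simple abelian variety $T'$ through a $\overline{\Q}$-isogeny, which maps first-kind forms to first-kind forms and second-kind forms to second-kind forms modulo exact forms. The relation then becomes a nontrivial $\overline{\Q}$-linear relation between a first-kind period and a second-kind period of $T'$ over a single cycle. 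This again contradicts the dimension count $1+2k$ in Theorem~\ref{theoremwustholz}.
\end{itemize}

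\emph{Main obstacle.} I expect the hard step to be the bookkeeping in the isogenous subcase: checking that the second-kind form $\eta$ on $T_0$ remains genuinely of the second kind, and not cohomologous to a first-kind form, after pullback. Concretely, I would verify with the inequalities~\eqref{eq2} that the beta integrand fails the holomorphy condition for the relevant eigenspace $V_n$, which is exactly what $c>1$ gives. I would also check that the eigencharacters of $\omega$ and $\eta$ under $\mu_N$ match, so that both live on the same isotypic factor. If $M=\mathrm{L.C.D}(b,c)$ is small, $T_0$ is an elliptic curve with CM, and the statement reduces to the classical independence of $\omega$ and $\eta$ periods of a CM elliptic curve over one cycle. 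I would handle those entries of Table~\ref{table:0} first and treat the remaining ones, where $\dim T_0=2$, via Lemma~\ref{lemma:milne-imprimitive} as in the proof of Lemma~\ref{lemma10}.
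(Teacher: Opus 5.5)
Your proposal follows the paper's proof. You split into $c>1$ and $c=1$, write ${}_2F_1$ as $\int_0^1\omega/B(b,c-b)$ with a first-kind numerator on $T_z$ and a denominator that is either a second-kind period on $T_0$ or an algebraic multiple of $\pi$, and conclude by W\"ustholz's theorem. Your isogenous/non-isogenous subcase analysis just unpacks the paper's one-line appeal to linear independence.

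One step needs tightening. In the isogenous subcase, after transport to $T'$ the two periods are still over different cycles, so the single-cycle form of Theorem~\ref{theoremwustholz} does not apply directly. The fix: $T'$ has CM, since it is isogenous to a factor of $T_0$. Hence one cycle is the image of the other under an endomorphism, and pulling back by that endomorphism keeps the second-kind class outside the holomorphic forms.
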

\begin{proof} For all $(a,b,c)$ in Table~\ref{table:0}, we have $c \geq1$. If $c >1$, the inequality \eqref{eq1} is satisfied, but \eqref{eq2} is not. So, $_2F_1(a,b,c;z)=\frac{\int_0^1 \omega}{B(b,c-b)}$ where $\int_0^1\omega$ and $B(b,c-b)$ are periods of first and second kind on abelian varieties $T_z$ and $T_0$ respectively. By W\"ustholz's theorem, the quotient is never algebraic since $\int_0^1 \omega$ and $B(b,c-b)$ are $\overline{\Q}$-linearly independent.  
\newline For $c=1$,  $\int_0^1\omega$ is a period of first kind as above but $B(b,c-b) \in \pi \cdot \overline{\Q}$. Again, W\"ustholz's theorem implies that $_2F_1(a,b,c;z)$ is not algebraic.
\end{proof}
\begin{center}
\begin{table}[H] 
\renewcommand{\arraystretch}{1.5}
\begin{tabular}{ |c|c|c| } 
 \hline
Triangle group & Value of $(a,b,c)$ for which $E(a,b,c)=\{0\}$ \\[7pt] 
\hline 
 $(\infty,\infty,\infty)$ & $(\frac{1}{2}, \frac{1}{2},1)$ \\[7pt] 
 \hline
  $(2,\infty,\infty)$ & $(\frac{1}{4}, \frac{1}{4},1), (\frac{3}{4}, \frac{3}{4},1), (\frac{1}{4}, \frac{3}{4},1), (\frac{3}{4}, \frac{3}{4},\frac{3}{2})$ \\[7pt]  
 \hline
   $(3,\infty,\infty)$ & $(\frac{1}{3}, \frac{1}{3},1), (\frac{2}{3}, \frac{2}{3},1), (\frac{1}{3}, \frac{2}{3},1), (\frac{2}{3}, \frac{2}{3},\frac{4}{3})$ \\[7pt]  
 \hline
    $(3,3,\infty)$ & $(\frac{1}{6}, \frac{1}{2},1), (\frac{1}{2}, \frac{5}{6},1), (\frac{1}{2}, \frac{1}{2},\frac{4}{3}), (\frac{1}{2}, \frac{5}{6},\frac{4}{3}), (\frac{5}{6}, \frac{5}{6},\frac{4}{3})$ \\[7pt]  
 \hline
   $(4,4,\infty)$ & $(\frac{1}{4}, \frac{1}{2},1), (\frac{1}{2}, \frac{3}{4},1), (\frac{1}{2}, \frac{1}{2},\frac{5}{4}), (\frac{1}{2}, \frac{3}{4},\frac{5}{4}), (\frac{3}{4}, \frac{3}{4},\frac{5}{4})$ \\[7pt]  
 \hline
   $(6,6,\infty)$ & $(\frac{1}{3}, \frac{1}{2},1), (\frac{1}{2}, \frac{2}{3},1), (\frac{1}{2}, \frac{1}{2},\frac{7}{6}), (\frac{1}{2}, \frac{2}{3},\frac{7}{6}), (\frac{2}{3}, \frac{2}{3},\frac{7}{6})$ \\[7pt]  
 \hline
  $(2,4,\infty)$ & $(\frac{1}{8}, \frac{3}{8},1), (\frac{5}{8}, \frac{7}{8},1), (\frac{1}{8}, \frac{5}{8},1), (\frac{3}{8}, \frac{7}{8},1),  (\frac{3}{8},\frac{3}{8}, \frac{5}{4}),$ \\[7pt]  
    & $(\frac{7}{8},\frac{7}{8}, \frac{5}{4}), (\frac{3}{8},\frac{7}{8}, \frac{5}{4}), (\frac{5}{8},\frac{5}{8}, \frac{3}{2}), (\frac{7}{8},\frac{7}{8}, \frac{3}{2}), (\frac{7}{8},\frac{5}{8}, \frac{3}{2})$ \\[7pt]  
 \hline
  $(2,6,\infty)$ & $(\frac{1}{6}, \frac{1}{3},1), (\frac{5}{6}, \frac{2}{3},1), (\frac{1}{6}, \frac{2}{3},1), (\frac{1}{3}, \frac{5}{6},1),  (\frac{1}{3},\frac{1}{3}, \frac{7}{6}),$ \\[7pt]  
    & $(\frac{5}{6},\frac{5}{6}, \frac{7}{6}), (\frac{5}{6},\frac{1}{3}, \frac{7}{6}), (\frac{2}{3},\frac{2}{3}, \frac{3}{2}), (\frac{5}{6},\frac{5}{6}, \frac{3}{2}), (\frac{5}{6},\frac{2}{3}, \frac{3}{2})$ \\[7pt]  
 \hline

 \end{tabular}
 \vspace{0.5cm}
\caption{Data with trivial Exceptional Set}
\label{table:0}
\end{table}
\end{center}

\section{Exceptional Sets for Takeuchi's Class II} \label{sec:ex2}
% In this section, we briefly discuss the case of Takeuchi’s class II. 
$$
  \begin{diagram}
  \node[2]{(2,4,6)}  \arrow{sw,l,-}{2}  \arrow{se,l,-}{2} \\
  \node[1]{(2,6,6)}    \arrow{s,l,-}{2} 
   \node[2]{(3,4,4)}   \\
   \node[1]  {(3,6,6)} 
  \end{diagram}
 $$
  \begin{center}Figure 2: Takeuchi's Class II \end{center}
As in class I, the hypergeometric function can be 
% expressed via Euler’s integral representation, which gives rise to algebraic curves and their associated abelian varieties as described in Section~\ref{sec:basic}. In particular, the corresponding integrals can again be
viewed as quotients of periods on suitable subvarieties of Jacobians. Consequently, the transcendence results of Wolfart and W\"ustholz give necessary conditions on algebraic values of $_2F_1(a,b,c;z)$. But due to the higher dimension of the corresponding variety, determining them explicitly becomes difficult.
\par The difficulty is that there is less known about the explicit construction of modular forms. In class $1$, the corresponding modular objects are automorphic forms on the modular curves. In contrast, for class $2$, these objects are automorphic forms on the Shimura curve $X_0^D(N)$ associated to an Eichler order of level $N$ in a quaternion algebra of discriminant $D$ over $\Q$.
\par In~\cite{yang2018special}, Yang has shown that 
\begin{equation}\label{yang}
_2F_1\begin{bmatrix}
\begin{matrix} \frac{1}{24} \\ \\ \end{matrix} & \begin{matrix} \frac{5}{24} \\ \frac{3}{4}\end{matrix} & ; \: s(\tau_d) 
\end{bmatrix} \in \frac{\Omega_{-d}}{\Omega_{-4}} \cdot \overline{\Q},  
\;\;\;\;\;
_2F_1\begin{bmatrix}
\begin{matrix} \frac{1}{24} \\ \\ \end{matrix} & \begin{matrix} \frac{7}{24} \\ \frac{5}{6}\end{matrix} & ; \: \frac{1}{s(\tau_d)} 
\end{bmatrix} \in \frac{\Omega_{-d}}{\Omega_{-3}} \cdot \overline{\Q}.  
\end{equation}
where $s$ is the Hauptmoduln of $X_0^6(1)/W_6$ taking values $0,1,\infty$ at the CM points of discriminant $-4,-24$ and $-3$ respectively, $\tau_d$ is a CM point of discriminant $-d$, and $\Omega_{-d}$ is a Chowla-Selberg period defined as
$$\Omega_{-d} = \sqrt{\pi} \Big(\prod_{i=1}^{d-1} \Gamma(\frac{i}{d})^{\chi_{-d}(i)} \Big)^{\frac{1}{2h^{\prime}(-d)}}$$
where $\chi_{-d}(n)$ is the Jacobi symbol $\legendre(-d,n)$, $h^{\prime}(-d)$ is the class number $h(D)$ of $\Q(\sqrt{-d})$ divided by the number of roots of unity in this field. The above facts~\eqref{yang} are consistent with Theorem~\ref{tmain} of this paper. From the identities in Table~\ref{table:identity} and~\cite[Corollary 5.10.4]{cohen2017modular} we can also explicitly write down Theorem~\ref{tmain} as follows. 
\begin{theorem}
Let $(a,b,c)$ be a datum satisfying $0<a,b,c<1$ such that the monodromy group $\Delta$ associated to $\text{HDE}(a,b,c;z)$ is an arithmetic triangle group in Takeuchi's class I (see Figure 1). Let $t$ be the Hauptmodul as in Table~\ref{table:2} for the corresponding group $\Delta$, and $d \in \{1,2,3\}$ be the corresponding positive integer as in Theorem~\ref{tmain}. Then for a CM point $\tau_{\mathfrak{d}}$ of discriminant $-\mathfrak{d}$, 
$$
_2F_1\begin{bmatrix}
\begin{matrix} a \\ \\ \end{matrix} & \begin{matrix} b \\ c \end{matrix} & ; \: t(\tau_d) 
\end{bmatrix} \in \frac{\Omega_{-\mathfrak{d}}}{\Omega_{-d}} \cdot \overline{\Q}. 
$$
\end{theorem}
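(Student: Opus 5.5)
We evaluate the hypergeometric--modular identity of Theorem~\ref{thm:hgmodid} at a CM point and read off the transcendental part with the Chowla--Selberg formula. Fix $(a,b,c)$, its Hauptmodul $t$ and its point $\tau_0=t^{-1}(0)$ from Table~\ref{table:identity}. Let $\tau_{\mathfrak d}$ be a CM point of discriminant $-\mathfrak d$, lying in the Schwarz triangle, or continued there along the branch fixed before Theorem~\ref{thm:hgmodid}. Then
$$
{}_2F_1(a,b,c;t(\tau_{\mathfrak d}))=\frac{\tau_{\mathfrak d}+\alpha\tau_0}{\tau_0+\alpha\tau_0}\left(\frac{\mathcal W(\tau_{\mathfrak d})}{\mathcal W(\tau_0)}\right)^{1/2}.
$$
The linear factor is algebraic, since $\tau_{\mathfrak d}$, $\tau_0$ and the root of unity $\alpha$ are all algebraic. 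It therefore suffices to show
$$
\mathcal W(\tau_{\mathfrak d})^{1/2}\in\overline{\Q}\,\Omega_{-\mathfrak d}
\qquad\text{and}\qquad
\mathcal W(\tau_0)^{1/2}\in\overline{\Q}^{\times}\,\Omega_{-d}.
$$

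By Table~\ref{table:identity}, in every case $\mathcal W(\tau)^{1/2}$ is, up to a nonzero constant, an eta quotient of total weight $1$. The possibilities are $\eta^2(\tau)$, $\eta(\tau)\eta(N\tau)$, $\eta^4(2\tau)/\eta^2(\tau)$ and $\eta^3(3\tau)/\eta(\tau)$. Hence $\mathcal W(\tau)^{1/2}=\kappa\prod_j\eta(m_j\tau)^{r_j}$ with $\sum_j r_j=2$. At a CM point $\tau$, each $m_j\tau$ is again a CM point in the same imaginary quadratic field $K$. Lang's theorem \cite[Theorem 12.2.2]{lang1987elliptic}, already used in Theorem~\ref{tforward}, shows that $\eta(m_j\tau)/\eta(\tau)\in\overline{\Q}$. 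It follows that $\mathcal W(\tau)^{1/2}\in\overline{\Q}\,\kappa\,\eta(\tau)^2$.

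The Chowla--Selberg formula, in the form \cite[Corollary 5.10.4]{cohen2017modular}, gives $|\eta(\tau)|^4\in\overline{\Q}\,\Omega_{-\mathfrak d}^2$ for a CM point of discriminant $-\mathfrak d$. Combined with the algebraicity of $\eta(\tau)/\eta(\tau')$ for CM points in the same field, one upgrades this to $\eta(\tau)^2\in\overline{\Q}\,\Omega_{-\mathfrak d}$, absorbing the argument of $\eta$ into $\overline{\Q}$. I expect this upgrade to be the main obstacle. The issue is that Chowla--Selberg controls only $|\eta|$, and one must check that the phase is algebraic.

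I would handle the phase by writing $\eta(\tau)^{24}=\Delta(\tau)$. Pick any CM point $\tau'$ in $K$ with real-valued $\Delta(\tau')$, for example $\tau'=\sqrt{-\mathfrak d}$ or $\tau'=\frac{1+\sqrt{-\mathfrak d}}2$. Then $\Delta(\tau)/\Delta(\tau')\in\overline{\Q}$, so $\eta(\tau)^2=\xi\,|\eta(\tau')|^2$ with $\xi^{12}$ algebraic, and hence $\xi\in\overline{\Q}$. The exponent normalization $1/(2h'(-\mathfrak d))$ in the definition of $\Omega_{-\mathfrak d}$ is exactly what makes $|\eta(\tau')|^2/\Omega_{-\mathfrak d}$ algebraic. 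The constant $\kappa$ cancels in the ratio $\mathcal W(\tau_{\mathfrak d})/\mathcal W(\tau_0)$.

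Applying the same argument at $\tau_0$ gives $\mathcal W(\tau_0)^{1/2}\in\overline{\Q}^{\times}\,\Omega_{-d}$. Here one checks from Table~\ref{table:identity} that $\tau_0$ lies in $\Q(\sqrt{-d})$ with $d$ as in Lemma~\ref{lemma10}, so its discriminant field has period $\Omega_{-d}$; this value is nonzero since $\eta$ does not vanish on $\UH$. For the rows with $d=2$, namely $\tau_0=i/\sqrt2$, one should verify that $\tau_0$ is a CM point of discriminant $-8$ in $\Q(\sqrt{-2})$, consistent with $d=2$. Dividing the two statements yields the claimed membership in $\frac{\Omega_{-\mathfrak d}}{\Omega_{-d}}\cdot\overline{\Q}$.
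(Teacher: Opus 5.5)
Your proposal is correct and follows the paper's route. The paper obtains this statement directly by evaluating the hypergeometric--modular identities of Table~\ref{table:identity} at CM points and applying the Chowla--Selberg consequence \cite[Corollary 5.10.4]{cohen2017modular}; you make explicit the reduction of each weight-one eta quotient to $\eta(\tau)^2$ via Lang's theorem and the bookkeeping of the phase. One step that both you and the paper leave implicit: data of Table~\ref{table:main} not listed in Table~\ref{table:identity}, such as $(\frac12,\frac12,\frac23)$ or $(\frac16,\frac12,\frac23)$, must first be reduced to a listed row by Euler's or Pfaff's transformation, which only introduces an algebraic factor at algebraic arguments.
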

% \ft{[Add a refinement of your resutls parallel to Yang's descriptions, which will help the readers understand your discussion below.]}
\par While proving~\eqref{yang}, Yang realized the meromorphic modular forms on Shimura curves as Borechards forms. Using Schofer's formula $_2F_1$ at the CM points of Borechards forms can be expressed as a multiple of Chowla Selberg periods. In particular, Yang showed that if $d=-4r^2$ for some integer $r$, the hypergeometric function $_2F_1(\frac{1}{24}, \frac{5}{24}, \frac{3}{4};s(\tau_d)) \in \overline{\Q}$. 
\par Now we define the special exceptional set for $\kappa \in \C$ as
$$E^\kappa(a,b,c) \coloneqq \{z \in \overline{\Q} \mid\; _2F_1(a,b,c;z) \in \kappa\overline{\Q} \}.$$ Baba and Granath showed that CM points of discriminant $-d$ of certain fractional linear transformations of the modular $j$-function belong to $E^{(\Omega_{-d}/\Omega_{-x})}(a,b,c)$ where $(a,b,c)$ is the datum coming from the triangle group $(2,4,6)$ and $x = \text{L.C.D.}(a,b,c)$. See~\cite[Theorem 1]{baba2015quaternionic} for details. The authors studied the modular forms for the triangle group $(2,4,6)$ related to abelian surfaces with quaternionic multiplication.
\par In class 1, Schneider's theorem gives a necessary and sufficient condition for the Hauptmoduln to be algebraic. In general, there is no such result for class II. Although in~\cite{shiga2007algebraic} Shiga and Wolfart give sufficient conditions for which the Schwarz triangle function is algebraic.
\par The exceptional set for class II datum  $(\frac{1}{24},\frac{5}{24},\frac{3}{4})$, is determined since $B(\frac{5}{24}, \frac{13}{24})$ is an algebraic multiple of $B(\frac{1}{2}, \frac{1}{4})$, making the dimension of corresponding variety $1$. This is the $\Omega_{-4}$ family. Therefore, the result aligns with the findings presented in~\cite{yang2018special}, which we utilize for the converse, along with the findings from class 1, i.e., when the Chowla-Selberg ratio is algebraic. The same applies to the datum $(\frac{1}{24},\frac{7}{24},\frac{5}{6})$, which is $\Omega_{-3}$ family. 
So in class II, the structure of exceptional sets remains the same. For a triplet $(a,b,c)$, the exceptional set, if it is non-zero, is of the form
$$E(a,b,c) = \{z \in \overline{\Q} \mid \exists \;\tau_d \in \Q(\sqrt{-d}) \cap \UH \;\;\text{with}\;\;z = t(\tau_d)\}$$
where $t$ is a Hauptmodul for the corresponding triangle group.

\begin{center}
\begin{table}[H]
\renewcommand{\arraystretch}{1.5}
\begin{tabular}{|w{c}{3cm}|w{c}{3cm}|w{c}{3cm}|w{c}{3cm}|}
\hline
Triangle group & $(a,b,c)$ & $\Q(\sqrt{-d})$ & $t$ \\[7pt]
\hline
\multirow{2}{*}{$(2, 4, 6)$} &  $(\frac{1}{24}, \frac{5}{24}, \frac{3}{4})$ & $\Q(i)$ & $s$ \\[7pt]
\cline{2-4} 
&  $(\frac{1}{24}, \frac{7}{24}, \frac{5}{6})$ & $\Q(i\sqrt{3})$ & $\frac{1}{s}$ \\[7pt]
\hline
\end{tabular}
\end{table}
\end{center}

\begin{remark}
The same method is not applied in determining exceptional sets for the data $(\frac{1}{24},\frac{5}{24}, \frac{1}{2})$. Due to~\cite{baba2015quaternionic}, we have sufficient conditions on $z$ for which $_2F_1(\frac{1}{24},\frac{5}{24}, \frac{1}{2};z)$ is algebraic. But we can not give a necessary condition on $z$ since the dimension of the abelian variety on which $B(\frac{5}{24},\frac{7}{24})$ lives as a period is $4$. If this beta value can be reduced to some algebraic multiple of $B(\alpha,\beta)$ with $\text{L.C.D}(\alpha,\beta)$ at least $12$, the classification of abelian surfaces can be used to determine the structure of the variety, which will be suitable for applying the results of Wolfart and W\"ustholz. 
\end{remark}
\begin{remark}
For a general data $(a,b,c)$ whose associated monodromy group is an arithmetic triangle group of Takeuchi's class III or beyond, the description of Hauptmodul in terms of known modular functions is not known, and we do not have identities similar to the explicit hypergeometric modular identities in Class I.

% \ft{\sout{But, the corresponding algebraic varieties can be found, and the transcendence results by Wolfart and W\"ustholz can be applied. }} 
\end{remark}
\newpage
\appendix
\section{Table determining exceptional sets}

\begin{center}
\begin{table}[H]
\renewcommand{\arraystretch}{1.15}
\begin{tabular}{|w{c}{3cm}|w{c}{3cm}|w{c}{3cm}|w{c}{3cm}|}
\hline
Triangle group & $(a,b,c)$ & $\Q(\sqrt{-d})$ & $ t$ \\[7pt]
\hline
$(2, \infty, \infty)$ & $(\frac{1}{4}, \frac{1}{4}, \frac{1}{2})$ & $\Q(i)$ & $S_2$ \\[7pt]
\hline
$(3, \infty, \infty)$ & $(\frac{1}{3}, \frac{1}{3}, \frac{2}{3})$ & $\Q(i\sqrt{3})$ & $S_3$ \\[7pt]
\hline
\multirow{3}{*}{$(3, 3, \infty)$} &  $(\frac{1}{6}, \frac{1}{6}, \frac{2}{3})$ & $\Q(i\sqrt{3})$ & $R_3$ \\[7pt]
\cline{2-4} 
&  $(\frac{1}{2}, \frac{1}{2}, \frac{2}{3})$ & $\Q(i\sqrt{3})$ & $R_3$ \\[7pt]
\cline{2-4} 
&  $(\frac{1}{6}, \frac{1}{2}, \frac{2}{3})$ & $\Q(i\sqrt{3})$ & $\frac{R_3}{R_3-1}$ \\[7pt]
\hline
\multirow{3}{*}{$(4, 4, \infty)$} &  $(\frac{1}{4}, \frac{1}{4}, \frac{3}{4})$ & $\Q(i)$ & $R_4$ \\[7pt]
\cline{2-4} 
&  $(\frac{1}{2}, \frac{1}{2}, \frac{3}{4})$ & $\Q(i)$ & $R_4$ \\[7pt]
\cline{2-4} 
&  $(\frac{1}{4}, \frac{1}{2}, \frac{3}{4})$ & $\Q(i)$ & $\frac{R_2}{R_4-1}$ \\[7pt]
\hline
\multirow{3}{*}{$(6, 6, \infty)$} &  $(\frac{1}{3}, \frac{1}{3}, \frac{5}{6})$ & $\Q(i\sqrt{3})$ & $R_6$ \\[7pt]
\cline{2-4} 
&  $(\frac{1}{2}, \frac{1}{2}, \frac{5}{6})$ & $\Q(i\sqrt{3})$ & $R_6$ \\[7pt]
\cline{2-4} 
&  $(\frac{1}{3}, \frac{1}{2}, \frac{5}{6})$ & $\Q(i\sqrt{3})$ & $\frac{R_6}{R_6-1}$ \\[7pt]
\hline
%\end{tabular}
%\end{table}
%\end{center}
%\begin{center}
%\begin{table}[H]
%\begin{tabular}{ |c|c|c|c| }
%\hline
\multirow{6}{*}{$(2, 4, \infty)$} &  $(\frac{1}{8}, \frac{1}{8}, \frac{3}{4})$ & $\Q(i)$ & $J_4$ \\[7pt]
\cline{2-4} 
&  $(\frac{5}{8}, \frac{5}{8}, \frac{3}{4})$ & $\Q(i)$ & $J_4$ \\[7pt]
\cline{2-4} 
&  $(\frac{1}{8}, \frac{5}{8}, \frac{3}{4})$ & $\Q(i)$ & $\frac{J_4}{J_4-1}$ \\[7pt]
\cline{2-4} 
&  $(\frac{1}{8}, \frac{1}{8}, \frac{1}{2})$ & $\Q(i\sqrt{2})$ & $1-J_4$  \\[7pt]
\cline{2-4} 
&  $(\frac{3}{8}, \frac{3}{8}, \frac{1}{2})$ & $\Q(i\sqrt{2})$ & $1-J_4$  \\[7pt]
\cline{2-4} 
&  $(\frac{1}{8}, \frac{3}{8}, \frac{5}{6})$ & $\Q(i\sqrt{2})$ & $\frac{J_4-1}{J_4}$\\[7pt]
\hline
\multirow{6}{*}{$(2, 6, \infty)$} &  $(\frac{1}{6}, \frac{1}{6}, \frac{5}{6})$ & $\Q(i\sqrt{3})$ & $J_6$ \\[7pt]
\cline{2-4} 
&  $(\frac{2}{3}, \frac{2}{3}, \frac{5}{6})$ & $\Q(i\sqrt{3})$ & $J_6$ \\[7pt]
\cline{2-4} 
&  $(\frac{1}{6}, \frac{2}{3}, \frac{5}{6})$ & $\Q(i\sqrt{3})$ & $\frac{J_6}{J_6-1}$ \\[7pt]
\cline{2-4} 
&  $(\frac{1}{6}, \frac{1}{6}, \frac{1}{2})$ & $\Q(i\sqrt{3})$ & $1-J_6$  \\[7pt]
\cline{2-4} 
&  $(\frac{1}{3}, \frac{1}{3}, \frac{1}{2})$ & $\Q(i\sqrt{3})$ & $1-J_6$  \\[7pt]
\cline{2-4} 
&  $(\frac{1}{3}, \frac{1}{6}, \frac{1}{2})$ & $\Q(i\sqrt{3})$ & $\frac{J_6-1}{J_6}$\\[7pt]
\hline
\end{tabular}
\vspace{0.5cm}
\caption{Description of Exceptional Sets for Class I}
\label{table:main}
\end{table}
\end{center}

\begin{landscape}
\centering\appendix{Table with explicit examples of algebraic values}

\thispagestyle{empty}
\begin{table}[H]
\centering
\small
\renewcommand{\arraystretch}{2.25}
\begin{tabular}{|c|c|c|c|c|}
\hline
Triangle group & Datum $(a,b,c)$ & $\tau$ & $t(\tau)$ & ${}_2F_1(a,b,c;t(\tau))$ \\
\hline
$(2,\infty,\infty)$
& $\left(\frac14,\frac14,\frac12\right)$
& $i$
& $9$
& $\dfrac{2-i}{2\sqrt2}$ \\
\hline
$(3,\infty,\infty)$
& $\left(\frac13,\frac13,\frac23\right)$
& $\sqrt{3}i$
& $2\left(329+261\sqrt[3]{2}+207\sqrt[3]{4}\right)$
& $\dfrac{5-3i\sqrt3}{6}\left(2^{-1/3}-2^{-2/3}\right)$ \\
\hline
$(3,3,\infty)$
& $\left(\frac16,\frac16,\frac23\right)$
& $\sqrt{3}i$
& $\dfrac12-\dfrac{11i}{4}$
& $\dfrac{\sqrt[3]{2}}{2\sqrt6}(\sqrt3-2i)(1+i)$  \\
\hline
$(4,4,\infty)$
& $\left(\frac14,\frac14,\frac34\right)$
& $i$
& $\dfrac{8-7i\sqrt2}{16}$
& $\dfrac{2^{7/8}\sqrt5}{4\sqrt{100-70\sqrt2}}
\left(1-i(5\sqrt2-7)\right)$ \\
\hline
$(6,6,\infty)$
& $\left(\frac13,\frac13,\frac56\right)$
& $\sqrt3 i$
& $\dfrac12-\dfrac{i}{6}\sqrt{1472+1174\sqrt[3]{2}+933\sqrt[3]{4}}$
& $\dfrac{(\sqrt[3]{2}-1)^{1/3}}{6^{2/3}}
\left(2\sqrt 3-i\right)$\\
\hline
\multirow{2}{*}{$(2,4,\infty)$}
& $\left(\frac18,\frac18,\frac34\right)$
& $2i$
& $\dfrac{17901}{32}+\dfrac{50787}{128}\sqrt2$
& 
$\dfrac{(3\sqrt2-4)^{1/8}}{2^{7/2}}
\left[
(5+i)\sqrt{2+\sqrt2}+(1-5i)\sqrt{2-\sqrt2}
\right]$ \\
\cline{2-5}
& $\left(\frac18,\frac18,\frac12\right)$
& $\sqrt2 i$
& $-\dfrac{425+325\sqrt2}{32}$
& $\dfrac{3\,\cdot\,2^{3/8}}{4(1+\sqrt2)^{1/8}}$ \\
\hline
\multirow{2}{*}{$(2,6,\infty)$}
& $\left(\frac16,\frac16,\frac56\right)$
& $\sqrt3 i$
& $\dfrac{1481}{9}+\dfrac{1174}{9}\sqrt[3]{2}+\dfrac{311}{3}\sqrt[3]{4}$
& $\dfrac{(\sqrt[3]{2}-1)^{1/3}}{6^{2/3}}
\left(2\sqrt 3-i\right)$ \\
\cline{2-5}
& $\left(\frac16,\frac16,\frac12\right)$
& $\sqrt3 i$
& $-\dfrac{1472}{9}-\dfrac{1174}{9}\sqrt[3]{2}-\dfrac{311}{3}\sqrt[3]{4}$
& $\dfrac{2(\sqrt[3]{2}-1)^{1/6}}{\sqrt3\sqrt{1+\sqrt[3]{2}}}$ \\
\hline
\end{tabular}
\caption{Explicit examples of algebraic hypergeometric values at CM points of the corresponding Hauptmoduln.}
\label{table:explicit_examples}
\end{table}

The example from the first row is illustrated in~\ref{appendixexample}. The same procedure is used to find these algebraic values. 
\end{landscape}
\newpage
\nocite{*}
% \ft{[order the references by the authors' last names]}
\bibliographystyle{plain}
\bibliography{mathscinet_ref}

\end{document}